\def\namedlabel#1#2{\begingroup
    #2%
    \def\@currentlabel{#2}%
    \phantomsection\label{#1}\endgroup
}
\newcommand\leqdot{\mathrel{\ensurestackMath{%
  \stackengine{-.5ex}{\lessdot}{-}{U}{c}{F}{F}{S}}}}
\providecommand*{\symdif}{\mathrel{\triangle}}
\theoremstyle{plain}
\newtheorem{theorem}{Theorem}[section]
\newtheorem{mainthm}{Main Theorem}[section]
\newtheorem{lemma}[theorem]{Lemma}
\newtheorem{proposition}[theorem]{Proposition}
\newtheorem{fact}[theorem]{Fact}
\newtheorem*{fact*}{Fact}
\crefname{fact}{Fact}{Facts}
\newtheorem{corollary}[theorem]{Corollary}
\newtheorem*{claim*}{Claim}
\theoremstyle{definition}
\newtheorem{definition}[theorem]{Definition}
\newtheorem*{definition*}{Definition}
\newtheorem*{notation*}{Notation}
\theoremstyle{remark}
\newtheorem{remark}[theorem]{Remark}
\newtheorem*{remark*}{Remark}
\newtheorem{example}[theorem]{Example}
\newtheorem*{example*}{Example}
\newtheorem*{note*}{Note}
\newtheorem{question}[theorem]{Question}
\newtheorem*{question*}{Question}
\newtheorem{observation}[theorem]{Observation}
\numberwithin{equation}{section}
\newcommand{\Z}{\mathbb{Z}}
\newcommand{\Q}{\mathbb{Q}}
\newcommand{\VC}{\operatorname{VC}}
\newcommand{\tp}{\operatorname{tp}}
\newcommand{\U}{\mathcal{U}}
\newcommand{\M}{\mathcal{M}}
\newcommand{\opp}{\operatorname{opp}}
\newcommand{\Sk}{\operatorname{Sk}}
\providecommand{\Th}{\operatorname{Th}}
\providecommand{\set}[2]{\{#1 \mid #2\}}
\providecommand{\sequence}[2]{(#1)_{#2}}
\renewcommand{\SS}{\mathcal{P}}
\providecommand{\phiopp}{{\phi^{\opp}}}
\newcommand{\defn}[1]{{\bf #1}}
\newcommand{\recall}[1]{{\it #1}}
\newcommand{\calF}{\mathcal F}
\renewcommand{\L}{\mathcal L}
\newcommand{\Sh}{\text{Sh}}
\newcommand{\ext}{\text{ext}}
\newcommand{\barA}{{\bar A}}
\begin{document}

\title{On large externally definable sets in NIP}
\author{Martin Bays}
\address{Institut für Mathematische Logik und Grundlagenforschung, Fachbereich
Mathematik und Informatik, Universität Münster, Einsteinstrasse 62, 48149
Münster, Germany}
\email{mbays@sdf.org}

\author{Omer Ben-Neria}
\address{Einstein Institute of Mathematics, Hebrew University of
Jerusalem, 91904, Jerusalem Israel.}
\email{omer.bn@mail.huji.ac.il}

\author{Itay Kaplan}
\address{Einstein Institute of Mathematics, Hebrew University of
Jerusalem, 91904, Jerusalem Israel.}
\email{kaplan@math.huji.ac.il}

\author{Pierre Simon}
\address{Dept.\ of Mathematics, University of California, Berkeley, 970 Evans
Hall \#3840, Berkeley, CA 94720-3840 USA}
\email{simon@math.berkeley.edu}
\thanks{Bays was partially supported by DFG EXC 2044–390685587 and ANR-DFG
AAPG2019 (Geomod).
Ben-Neria was partially supported by the Israel Science Foundation, Grant 1832/19.
Kaplan would like to thank the Israel Science Foundation (ISF) for their
support of this research (grants no. 1254/18 and 804/22).
Simon was partially supported by the NSF (grants no. 1665491 and 1848562).}

\begin{abstract}
We study cofinal systems of finite subsets of $\omega_1$. We show that while 
such systems can be NIP, they cannot be defined in an NIP structure. We deduce 
a positive answer to a question of Chernikov and Simon from 2013: in an NIP 
theory, any uncountable externally definable set contains an infinite 
definable subset. A similar result holds for larger cardinals.
\end{abstract}

\maketitle

\section{Introduction}

Suppose that $M$ is a structure and $x$ a tuple of variables. Recall that a set $X \subseteq M^x$ is \defn{$M$-definable} if there is some formula $\phi(x)$ over $M$ such that $\phi(M)=X$. The set $X$ is \defn{externally definable} if there is some elementary extension $N \succ M$ and a formula $\psi(x)$ over $N$ such that $X = \psi(M) = \set{a\in M^x}{N\vDash \psi(a)}$.

When $\Th(M)$ is stable, all externally definable subsets are in fact $M$-definable (this is a characterization of stability: all types over any model are definable).

Let $T$ be a theory.
We consider the following natural question:
\begin{question} \label{que:hanff number question}
  Is there some (infinite) cardinal $\lambda$ such that for any $M \vDash T$ and any externally definable set $X \subseteq M^k$, if $|X| \geq \lambda$ then $X$ contains an infinite $M$-definable subset?
\end{question}

We cannot hope to say much about externally definable sets in arbitrary theories. In particular, supposing that $T$ has a strong form of IP, the answer to \cref{que:hanff number question} is negative (see \cref{rem:random}).
On the other hand:

\begin{fact}\cite[Corollary 1.12]{CSPairsI}\label{fac:beth}
  Suppose $T$ is NIP. Then the answer to \cref{que:hanff number question} is positive: one can take $\lambda = \beth_\omega$.
\end{fact}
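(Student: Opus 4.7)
The plan is to prove the contrapositive: assume $X$ contains no infinite $M$-definable subset, and bound $|X| < \beth_\omega$. Write $X = \phi(M,b)$ with $\phi(x,y)$ NIP and $b$ a tuple in some elementary extension $N \succ M$. Fix a small $M_0 \preceq M$ containing any auxiliary parameters we will need.

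Supposing for contradiction that $|X| \geq \beth_\omega$, iterated Erdős-Rado extracts an infinite sequence $I = (a_i)_{i<\omega}$ of distinct elements of $X$ indiscernible over $M_0$. By shrinking of indiscernibles in NIP applied to $b$, after removing finitely many terms $I$ becomes indiscernible also over $M_0 b$, so $\phi(a_i,b)$ holds for every $i$. Next, apply the honest definitions lemma of Chernikov-Simon for the NIP formula $\phi$: there is a formula $\theta(x,z)$ in $\L$ such that for each finite $B \subseteq M$ some $c \in M$ satisfies $\theta(B,c) = \phi(B,b) \cap B$ and $\theta(M,c) \subseteq X$. Taking $B_n := \{a_0,\ldots,a_{n-1}\}$ produces $c_n \in M$ with $\theta(a_i,c_n)$ for $i<n$ and $\theta(M,c_n) \subseteq X$, and by hypothesis each $\theta(M,c_n)$ must be finite.

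The heart of the argument is now to upgrade these finite approximations to a single parameter $c \in M$ with $\theta(M,c) \subseteq X$ and $\theta(a_i,c)$ for infinitely many $i$; then $\theta(M,c)$ is the desired infinite $M$-definable subset of $X$, contradicting the hypothesis. Compactness applied to the partial type $\{\theta(a_i,z) : i<\omega\} \cup \{\forall y(\theta(y,z)\to\phi(y,b))\}$ produces such a parameter only in an elementary extension, yielding merely an externally definable subset. Descending to an actual $c \in M$ is the main obstacle, and the NIP assumption enters crucially here, via the interplay of the $M_0 b$-indiscernibility of $I$ (which by shrinking makes the truth of $\theta(a_i,c')$ uniform in $i$ for any fixed $c' \in M$) and the finite-satisfiability / coheir structure of $\tp(b/M)$, which allow one to replace a monster-model witness by an $M$-witness.
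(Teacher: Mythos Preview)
First, note that the paper does not itself prove this statement: it is recorded as a Fact cited from \cite{CSPairsI}, so there is no in-paper proof to compare against. The paper does, however, indicate the shape of the original argument (in the paragraph preceding \cref{prop:aleph_omega good enough}): honest definitions yield a cofinal family $\calF$ of finite subsets of $X$, and an Erd\H{o}s--Rado/alternation-rank argument then shows that if $|X| \geq \beth_\omega$ this family cannot be a VC-class --- contradicting NIP of the honest-definition formula $\psi$. In particular, no indiscernible sequence inside $X$ is needed, and no witness has to be pulled down from an elementary extension.

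Your proposal, by contrast, is not a proof: it stops exactly at the hard step. You correctly observe that compactness only produces a witness $c$ in an elementary extension, and you explicitly call descending to $c \in M$ ``the main obstacle'' --- but you do not resolve it, and the hints you give do not work. The claim in step~4 that shrinking of indiscernibles makes $I$ indiscernible over $M_0 b$ after deleting finitely many terms is false for $\omega$-indexed sequences: NIP only guarantees that each \emph{individual} formula eventually stabilises along $I$, with a stabilisation point depending on the formula, so no cofinite tail is fully indiscernible over the new parameter. (This step is in any case unnecessary: $\phi(a_i,b)$ holds for all $i$ simply because $a_i \in X$.) More seriously, the parenthetical in step~7 --- that indiscernibility over $M_0 b$ forces the truth of $\theta(a_i,c')$ to be uniform in $i$ for arbitrary $c' \in M$ --- is a non sequitur: $c'$ ranges over $M$, not over $M_0 b$, so indiscernibility over $M_0 b$ says nothing about such $c'$. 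Without a genuine mechanism for descent, your final sentence is a description of the difficulty rather than a solution to it. The clean way to finish from your step~6 is precisely the Chernikov--Simon route: the sets $\theta(M,d) \subseteq X$ form a cofinal family of finite subsets of $X$ with bounded VC-dimension (since $\theta$ is NIP), and one derives a contradiction from $|X| \geq \beth_\omega$ combinatorially, never needing to find a single $c \in M$.
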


For a complete theory $T$, let $\ext(T)$ be the minimal $\lambda$ as in \cref{que:hanff number question} if such exists, and $\ext(T) = \infty$ otherwise. If $T$ is NIP, \cref{fac:beth} shows that $\ext(T) \leq \beth_\omega$. We first observe that we cannot hope to improve this to $\ext(T) = \aleph_0$.

\begin{example}\cite[just above Question 1.13]{CSPairsI} \label{exa:ext >=alph1}
  Let $M$ be the linear order $(\omega + \Z, <)$, whose theory is NIP (and even dp-minimal, see \cite[Proposition A.2]{simon-NIP}). Then, $\omega$ is externally definable (as is any cut), but no infinite subset of $\omega$ is $M$-definable, since $\Th(M)$ has quantifier elimination after adding the successor and predecessor functions. Thus, $\ext(\Th(M))\geq \aleph_1$.
\end{example}

In their paper \cite[Question 1.13]{CSPairsI}, Chernikov and Simon posed the following question:
\begin{quote}
  Is it true that $\ext(T) \leq \aleph_1$ whenever $T$ is NIP?
\end{quote}

In this paper we positively answer this question (see \cref{mainthm intro}). We use the existence of honest definitions (see \cref{def:honest definitions,fac: uniform of HD}). Let $X = \phi(M,c)$ be externally definable and uncountable and let $\psi(x,z)$ be an honest definition for $\tp_\phiopp(c/M)$. This means that for every finite set $X_0 \subseteq X$, there is some $d\in M^z$ such that
\begin{equation} \label{eqn:HD}
X_0 = \phi(X_0,c) \subseteq \psi(M,d) \subseteq \phi(M,c) = X. \tag{*}
\end{equation}
If one of these sets $Y_d: = \psi(M,d)$ is infinite we are done, so assume for
all $d$ as in \cref{eqn:HD}, $Y_d$ is finite. We get a family of finite subsets of $X$ which is cofinal as a subset of the partial order $\SS^{<\omega}(X)$ of finite subsets of $X$. This raises the question:
\begin{question} \label{que:cofinal family}
  Suppose $\calF$ is a cofinal family of finite subsets of $\aleph_1$. Can $\calF$ have finite VC-dimension?

  In other words, can the relation $\mathord{\in} |_{(\aleph_1 \times \calF)}$ be NIP?
\end{question}

In \cref{thm:cofinal family of VC dim 2} we give a positive answer to \cref{que:cofinal family}. This means that the fact that the honest definition is NIP is not in itself a guarantee that $X$ has an infinite $M$-definable subset (see \cref{rem:honestNIP}).

On the other hand, we prove that if $\calF$ is a cofinal family of finite subsets of $\aleph_1$, then the two-sorted structure $(\aleph_1, \calF;\mathord{\in})$ has IP. We conclude (in \cref{thm:main theorem paper}):
\begin{mainthm}\label{mainthm intro}
  For $T$ NIP, $\ext(T) \leq \aleph_1$.
\end{mainthm}

\subsection{A generalisation to arbitrary cardinals}\label{sec:kappa}
We also consider the following generalisation of \cref{que:hanff number question}.
\begin{question} \label{que:hanff number general question}
  Let $T$ be a theory and $\kappa$ an infinite cardinal. Is there some cardinal $\lambda$ such that for any $M \vDash T$ and any externally definable set $X \subseteq M^k$, if $|X| \geq \lambda$ then $X$ contains an $M$-definable subset of size $\geq \kappa$?
\end{question}

For a complete theory $T$, let $\ext(T,\kappa)$ be the minimal $\lambda$ as in \cref{que:hanff number general question} (if it does not exist, let $\ext(T,\kappa) = \infty$).
So $\ext(T) = \ext(T,\aleph_0)$.
The proof of \cite[Corollary 1.12]{CSPairsI} can easily be adapted to show that if $T$ is NIP then $\ext(T,\kappa) \leq \beth_\omega(\kappa)$.

The following slight adaptation of \cref{exa:ext >=alph1} gives us an NIP theory $T$ (namely DLO) with $\ext(T,\kappa) \geq \kappa^+$ for $\kappa \geq \aleph_1$. Let $I$ be an extension of the linear order $(\kappa,<)$ where between any two ordinals we put a copy of $\Q$. Let $M = \Q + I + \Q$. Then $\M$ is a dense linear order and thus $\Th(\M)$ has quantifier elimination. The set $I$ is externally definable but contains no $M$-definable subset of size $\kappa$.

In fact we prove the main theorem, \cref{thm:main theorem paper}, in this generality: if $T$ is NIP then $\ext(T,\kappa)\leq \kappa^+$.

\subsection{Structure of the paper}
In \cref{sec:Preliminaries}, we give the necessary preliminaries on NIP and honest definitions. In \cref{sec:VC-dim of a cofinal family} we discuss \cref{que:cofinal family}. In \cref{sec:NIP and cofinal families} we prove the technical lemmas needed to prove \cref{thm:main theorem paper}, which is proven in \cref{sec:conclusion} and supplemented by some open questions.

In a previous version of this paper there was a mistake in the proof of \cref{thm:cofinal family of VC dim 2} (pointed out to us by George Peterzil). The old proof involved the construction of well orders of order type $\omega$ on countable ordinals which agree up to finite sets. Since this result may be of independent interest, we put it in \cref{appendix}. 

\subsection{Acknowledgements}
The work on this project started while the third author was a visiting researcher at the 2021 Thematic Program on Trends in Pure and Applied Model Theory at the Fields Institute, and he would like to thank the Fields Institute for their hospitality.

We would like to thank George Peterzil for pointing out an error in the proof of \cref{thm:cofinal family of VC dim 2} that appeared in a previous version. We would also like to thank the anonymous referee for their meticulous reading and useful comments.
\section{Preliminaries} \label{sec:Preliminaries}

\subsection{Notations}\label{subsec:notation}
Our notation is standard. We use $\L$ to denote a first order language and $\phi(x,y)$ to denote a formula $\phi$ with a partition of (perhaps a superset of) its free variables. Let $\phiopp$ be the partitioned formula $\phi(y,x)$ (it is the same formula with the partition reversed).

$T$ will denote a complete theory in $\L$, and $\U \vDash T$ will be a monster model (a sufficiently large saturated model).

When $x$ is a tuple of variables and $A$ is a set contained in some structure (perhaps in a collection of sorts), we write $A^{x}$ to denote the tuples of the sort of $x$ (and of length $|x|$) of elements from $A$; alternatively, one may think of $A^{x}$ as the set of assignments of the variables $x$ to $A$. If $M$ is a structure and $A \subseteq M^x$, $b\in M^y$, then $\phi(A,b)=\set{a\in A}{M \vDash \phi(a,b)}$.

When $B \subseteq \U$, $\L(B)$ is the language $\L$ augmented with constants for elements from $B$ so that a set is $B$-definable if it is definable in $\L(B)$.

For an $\L$-formula $\phi(x,y)$, an \recall{instance} of $\phi$ over $B \subseteq  \U$ is a formula $\phi(x,b)$ where $b \in B^y$, and a (complete) \recall{$\phi$-type} over $B$ is a maximal partial type consisting of instances and negations of instances of $\phi$ over $B$. We write $S_{\phi}(B)$
for the space of $\phi$-types over $B$ in $x$ (in this notation we keep in mind the partition $(x,y)$, and $x$ is the first tuple there). We also use the notation $\phi^1=\phi$ and $\phi^0 = \neg \phi$. For $a \in \U^x$, we write $\tp_\phi(a/B) \in S_\phi(B)$ for its $\phi$-type over $B$.

\subsection{VC-dimension and NIP}\label{subsec:VC-dim and NIP}

\begin{definition} [VC-dimension]
  Let $X$ be a set and $\calF\subseteq\SS(X)$. The pair $(X,\calF)$ is called a \defn{set system}.
  We say that $A\subseteq X$ is \defn{shattered} by $\calF$ if for every $S\subseteq A$ there is $F\in\calF$ such that $F\cap A=S$. A family $\calF$ is said to be a \defn{VC-class} on $X$ if there is some $n<\omega$ such that no subset of $X$ of size $n$ is shattered by $\calF$. In this case the \defn{VC-dimension of $\calF$}, denoted by $\VC(\calF)$, is the smallest integer $n$ such that no subset of $X$ of size $n+1$ is shattered by $\calF$.

  If no such $n$ exists, we write $\VC(\calF)=\infty$.
\end{definition}

\begin{definition} \label{def:NIP formula, theory}
  Suppose $T$ is an $\L$-theory and $\phi(x,y)$ is a formula.
  Say $\phi(x,y)$ is \defn{NIP} if for some/every $M\vDash T$, the family $\set{\phi(M^x,a)}{a\in M^{y}}$ is a VC-class. Otherwise, $\phi$ is \defn{IP} (IP stands for ``Independence Property'' while NIP stands for ``Not IP'').

  The theory $T$ is \defn{NIP} if all formulas are NIP. A structure $M$ is \defn{NIP} if $\Th(M)$ is NIP.
\end{definition}

\begin{definition}\label{def:honest definitions}
  \cite[Definition~3.16 and Remark~3.14]{simon-NIP} Suppose $T$ is an $\L$-theory and $M \vDash T$. Suppose that $\phi(x,y)$ is a formula, $A\subseteq M^{x}$
  is some set and $b \in \U^y$. Say that an $\L$-formula $\psi(x,z)$ (with $z$ a tuple of variables each of the same sort as $x$) 
  is an \defn{honest definition} of $\tp_\phiopp(b/A)$ if for
  every finite $A_0\subseteq A$ there is some $c\in A^{z}$ such
  \[\phi(A_0,b) \subseteq \psi(A,c) \subseteq \phi(A,b).\]
  In other words, for all $a\in A$, if $\psi(a,c)$ holds then so does $\phi(a,b)$
  and for all $a\in A_0$ the other direction holds: if $\phi(a,b)$ holds
  then $\psi(a,c)$ holds.
\end{definition}

The existence of honest definitions for NIP theories was first proved in \cite{CSPairsI}. This was improved in \cite{CS-extDef2} to get \emph{uniformity} of the honest definitions assuming that $T$ is NIP. This was subsequently improved to:

\begin{fact}\cite[Corollary 5.23]{bays2021density} \label{fac: uniform of HD}
  If $\phi(x,y)$ is NIP then there is a formula $\psi(x,z)$ that serves as an honest definition for any $\phiopp$-type over any set $A$ of size $\geq 2$.
\end{fact}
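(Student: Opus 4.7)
My plan is to build on the non-uniform honest definition theorem of Chernikov--Simon and extract a uniform formula via the quantitative VC-theory available for NIP formulas. Fix $\phi(x,y)$ NIP and set $d := \VC(\phi)$ and $d^* := \VC(\phiopp)$, both finite by NIP.

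First I would isolate a candidate schema: a formula $\psi(x,z_1,\ldots,z_n)$ (with $z_i$ in the sort of $x$) whose arity $n$ depends only on $d$ and $d^*$, and whose form is a Boolean combination expressing that $(z_1,\ldots,z_n,x)$ realizes a prescribed $\phi$-pattern against some $y$. Given $b$ external and $A$ (with $|A|\geq 2$) and a finite $A_0\subseteq A$, the strategy for choosing $\bar c\in A^n$ would proceed in two steps: extract a coheir-like finite sequence of parameters in $A^y$ approximating $\tp_\phiopp(b/A)$ on $A_0$, then pick representatives $c_i\in A^x$ separating the atoms of the Boolean algebra generated by these parameters. Sauer--Shelah forces polynomially many such atoms, and the $(p,q)$-theorem for NIP set systems supplies a uniform arity bound $n=n(d,d^*)$. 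Collapsing the finitely many Boolean shapes of a given arity $n$ into a single disjunction yields one formula $\psi$ that serves all $(b,A)$.

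The main obstacle, I expect, is the upper containment $\psi(A,\bar c)\subseteq\phi(A,b)$. The lower containment $\phi(A_0,b)\subseteq\psi(A,\bar c)$ is combinatorially easy once $\bar c$ exhibits the right pattern on $A_0$, but the upper one requires that no $a\in A\setminus\phi(A,b)$ accidentally matches the chosen pattern. Ruling this out is where genuine NIP input is needed: if such a bad $a$ existed for every choice of $\bar c$, one would iteratively assemble an alternating sequence witnessing IP for $\phi$, contradicting the hypothesis. The hypothesis $|A|\geq 2$ seems to be exactly what is needed to allow genuinely separating witnesses $\bar c$ to be drawn from $A$ itself (so that, with $|A|=1$, the only degenerate $\psi$ instances available over $A$ would be insufficient). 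I anticipate that the density-of-compressibility machinery of \cite{bays2021density} is precisely what streamlines the separation step and yields the single uniform formula $\psi$ asserted in \cref{fac: uniform of HD}.
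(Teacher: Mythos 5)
This statement is quoted from \cite[Corollary~5.23]{bays2021density}; the present paper does not reprove it, so there is no internal proof to compare against. Your outline has the right flavour --- start from non-uniform honest definitions, bring in a coheir sequence, and use Sauer--Shelah together with $(p,q)$-theorem-style combinatorics to extract a uniform arity and collapse the finitely many Boolean shapes into one $\psi$ --- but this is essentially the route of \cite{CS-extDef2}, which establishes uniformity under the hypothesis that the \emph{whole theory} $T$ is NIP (it passes through the Shelah expansion and a pair construction, both of which need NIP of $T$, not just of $\phi$). The content of Corollary~5.23 of \cite{bays2021density}, and the reason the paper cites it rather than \cite{CS-extDef2}, is precisely the improvement from ``$T$ NIP'' to ``$\phi(x,y)$ NIP alone.'' Your sketch never localizes any step to $\phi$; naming the density-of-compressibility machinery in the last sentence acknowledges this but does not address it, so the part of the statement that actually needs a new idea is left open.

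There are also some smaller slips. ``A coheir-like finite sequence of parameters in $A^y$'' does not typecheck: $A\subseteq M^x$, so $A$ contains no $y$-tuples; the coheir parameters live in a saturated elementary extension, while the parameters eventually substituted into $\psi$ must be $x$-tuples from $A$ (and the finitely satisfiable/coheir property is what lets one pass from the former to the latter). Finally, the role of $|A|\geq 2$ is not about the $(p,q)$-theorem supplying ``genuinely separating witnesses,'' but about excluding a degenerate boundary case: when $|A|=1$ the only tuple in $A^z$ is constant, and no single formula $\psi(x,z)$ can produce both $\psi(A,c)=\emptyset$ and $\psi(A,c)=A$ as $\phi(A,b)$ ranges over $\{\emptyset,A\}$, whereas two distinct elements of $A$ give enough room to encode both extremes.
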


We also recall the Shelah expansion.

\begin{definition}
  For a structure $M$, the \defn{Shelah expansion} $M^{\Sh}$ of $M$ is given by: for any formula $\phi(x,y)$ and any $b \in \U^y$, add a new relation $R_{\phi(x,b)}(x)$ interpreted as $\phi(M,b)$.
\end{definition}

\begin{fact} \cite{MR2570659} \label{fac:Shelah expansion}
  If $T$ is NIP then for any $M\vDash T$, $M^{\Sh}$ is NIP.
\end{fact}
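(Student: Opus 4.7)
My plan is to reduce the statement to the closure property that every set definable in $M^{\Sh}$ is externally definable in $M$; granting this, NIP of $M^{\Sh}$ will follow from a VC-dimension argument.

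For the closure, fix elementary extensions $M \prec N \prec N^*$ with $N^*$ sufficiently saturated. Boolean closure of externally definable sets is straightforward (collect all external parameters in $N$). The crux is closure under projection: given $X = \phi(M^{x+y}, c)$ with $c \in N$, set $\pi X := \{a \in M^x : \exists b \in M^y,\ (a, b) \in X\}$. Using \cref{fac: uniform of HD} applied to $\phi^{\opp}$, fix $\psi(x, y, z')$ such that for every finite $A_0 \subseteq X$ there is $c' \in M^{z'}$ with $A_0 \subseteq \psi(M^{x+y}, c') \subseteq X$. I consider the partial type $p(z^*)$ over $N \cup M$ consisting of the clause $\forall x\,y\,(\psi(x, y, z^*) \to \phi(x, y, c))$, together with $\exists y\,\psi(a, y, z^*)$ for each $a \in \pi X$ and $\neg \exists y\,\psi(a, y, z^*)$ for each $a \in M^x \setminus \pi X$. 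Any finite fragment is satisfied by some $c' \in M^{z'}$ supplied by the honest definition applied to the finite set of witnesses $\{(a_i, b_i)\}$ for the $a_i \in \pi X$ appearing in the fragment; the negation clauses then hold automatically by elementarity of $M \prec N^*$ combined with the inclusion $\psi(M^{x+y}, c') \subseteq X$. Hence $p$ is realised by some $c^* \in N^{*\,z'}$, and one verifies $\pi X = (\exists y\,\psi)(M^x, c^*)$, exhibiting $\pi X$ as externally definable.

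Granting closure, an induction on the complexity of an $\L^{\Sh}$-formula $\chi(x; y)$ produces an $\L$-formula $\phi_\chi(x, y, z)$ and external parameter $c_\chi \in N^*$ with $\chi(M^{x+y}) = \phi_\chi(M^{x+y}, c_\chi)$. Then the family of instances $\{\chi(M^x, b) : b \in M^y\}$ embeds into $\{\phi_\chi(M^x, y', z') : (y', z') \in N^{*(y+z)}\}$, which is a VC-class since $T$ is NIP, so $\chi$ is NIP in $M^{\Sh}$. The main obstacle is the projection-closure step: setting up $p$ correctly — particularly the negation clauses needed to pin $\pi X$ down exactly — and establishing finite satisfiability via the uniform honest definition. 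This is precisely where NIP of $T$ is used essentially, since \cref{fac: uniform of HD} requires it.
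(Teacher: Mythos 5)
The paper states this as a cited result (Shelah, \cite{MR2570659}) and does not give a proof, so there is no internal argument to compare against. Your route --- deducing NIP of $M^{\Sh}$ from closure of externally definable sets under projection, established via honest definitions --- is the modern Chernikov--Simon style argument rather than Shelah's original combinatorial one, and it fits naturally into the present paper's toolkit since \cref{fac: uniform of HD} is already on hand.

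There is one flaw in the finite-satisfiability step of the projection argument. You include the universal clause $\forall x\,y\,(\psi(x,y,z^*) \to \phi(x,y,c))$ in the type $p(z^*)$ and assert that the witness $c' \in M^{z'}$ supplied by the honest definition satisfies each finite fragment. But the honest definition (\cref{def:honest definitions}) only yields the trace inclusion $\psi(M^{x+y},c') \subseteq \phi(M^{x+y},c)$; it gives no control over $\psi(N^{*\,x+y},c')$, so there is no reason for $N^*$ to satisfy $\forall x\,y\,(\psi(x,y,c') \to \phi(x,y,c))$ --- that is a property of $N$ which the honest definition does not address, and it can genuinely fail. Fortunately the clause is unnecessary: your own final verification that $\pi X = (\exists y\,\psi)(M^x,c^*)$ uses only the positive clauses $\exists y\,\psi(a,y,z^*)$ for $a \in \pi X$ and the negation clauses $\neg\exists y\,\psi(a,y,z^*)$ for $a \in M^x \setminus \pi X$, and you correctly argue that $c'$ satisfies both families in any finite fragment (positives because the honest definition contains the chosen witnesses, negatives by $\psi(M^{x+y},c') \subseteq X$ together with elementarity of $M \prec N^*$, since $c' \in M$). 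Simply delete the universal clause from $p$ and the proof goes through. The remaining steps --- Boolean closure, the induction reducing every $\L^{\Sh}$-formula to an external $\L$-instance, and the VC-class comparison using NIP of $T$ --- are correct.
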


\section{The VC-dimension of cofinal families of finite subsets of an uncountable set} \label{sec:VC-dim of a cofinal family}

The goal of this section is to answer \cref{que:cofinal family}.

\begin{definition}\label{def:cofinal}
  We say that a set $\calF$ of subsets of a set $X$ is \defn{$\omega$-cofinal} if every finite subset of $X$ is contained in some element of $\calF$. (In the case that $\calF$ consists of finite subsets of $X$, we omit ``$\omega$-''.)
\end{definition}

We start with an easy observation.

\begin{remark} \label{rem:stable cofinal}
  If $\calF$ is an $\omega$-cofinal family of subsets of an infinite set $X$ such that if $s \in \calF$ then $|s|<|X|$, then $\in |_{(X \times \calF)}$ is \defn{unstable}: there exist $\sequence{x_i,s_i}{i \in \omega}$ such that $x_i \in s_j$ iff $i\leq j$. Indeed, inductively choose $x_i \in X$ and $s_i \in \calF$ such that $x_i \notin \bigcup_{j<i}s_j$ and $s_i$ contains $\set{x_j}{j\leq i}$.
\end{remark}

The proof of \cite[Corollary 1.12(2)]{CSPairsI} can be adapted to say that if $|X| \geq \beth_\omega$ and $\calF$ is a cofinal family of finite subsets of $X$ then $\calF$ is not a VC-class (i.e., it has IP). In fact, one can also make a connection between the VC-dimension of $\calF$ and the cardinality of $X$ (via the alternation rank of the appropriate relation). The next proposition replaces $\beth_\omega$ with $\aleph_\omega$, and gives a precise lower bound on the VC-dimension in terms of the cardinality of $X$.

\begin{proposition}\label{prop:aleph_omega good enough}
  If $|X| \geq \aleph_n$, then any cofinal system $\calF$ of finite subsets of $X$ has VC-dimension $> n$.
  So any cofinal set system of finite sets on a set of size $\geq \aleph_\omega$ has IP.
\end{proposition}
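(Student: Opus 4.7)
The plan is to proceed by induction on $n$. The base case $n=0$ is immediate: $X$ is infinite and any $F_0 \in \calF$ is finite, so pick $x \in X \setminus F_0$; then $F_0$ realizes the empty pattern on $\{x\}$, while cofinality supplies some $F_1 \in \calF$ with $x \in F_1$ realizing $\{x\}$. Hence $\{x\}$ is shattered and $\VC(\calF) \geq 1$.

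For the inductive step with $|X| \geq \aleph_n$ and $n \geq 1$: well-order $X$ as $\{x_\alpha : \alpha < \omega_n\}$, and by cofinality pick $F_\alpha \in \calF$ with $x_\alpha \in F_\alpha$. Using regularity of $\omega_n$, extract an $\aleph_n$-indexed subsequence $(F_{\alpha_\xi})_\xi$ with $\max F_{\alpha_\xi} < \alpha_{\xi+1}$, so that $x_{\alpha_\eta} \notin F_{\alpha_\xi}$ whenever $\eta > \xi$. Apply the $\Delta$-system lemma (valid at the regular uncountable cardinal $\aleph_n$) to obtain an $\aleph_n$-subcollection forming a $\Delta$-system with finite root $R$ and pairwise disjoint petals; after discarding finitely many indices, we have $J \subseteq \omega_n$ of cardinality $\aleph_n$ with $x_\xi \in F_\xi \setminus R$ for each $\xi \in J$, and $F_\xi \cap \{x_\eta : \eta \in J\} = \{x_\xi\}$. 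The shifted family $\calF^R := \{F \setminus R : F \in \calF,\ R \subseteq F\}$ is cofinal on $X \setminus R$ of cardinality $\aleph_n$, so its restriction to any $\aleph_{n-1}$-sized $Y = \{x_\eta : \eta \in J_0\}$ with $J_0 \subseteq J$ of size $\aleph_{n-1}$ is cofinal on $Y$. By the induction hypothesis there is a shattered $n$-set $B \subseteq Y$ with witnesses $\{F^B_S \in \calF : R \subseteq F^B_S,\ F^B_S \cap B = S\}$ for each $S \subseteq B$.

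Choose $b = x_\xi$ for some $\xi \in J \setminus J_0$ with $x_\xi \notin B \cup \bigcup_S F^B_S$ (possible because this set is finite and $|J \setminus J_0| = \aleph_n$), and form $A := B \cup \{b\}$. For patterns $S' \subseteq B$ (so $b \notin S'$), the witness $F^B_{S'}$ satisfies $F^B_{S'} \cap A = S'$ since $b \notin F^B_{S'}$ by choice. For the pattern $S' = \{b\}$, the witness $F_\xi$ works: $F_\xi \cap B = \emptyset$ because $B \subseteq \{x_\eta : \eta \in J\}$, $B \cap R = \emptyset$ (as $x_\eta \notin R$ for $\eta \in J$), and $B \cap P_\xi = \emptyset$ (since the petal $P_\xi$ is disjoint from $P_\eta \ni x_\eta$ for $\eta \neq \xi$), so $F_\xi \cap A = \{b\}$. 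For the remaining patterns $S' = S \cup \{b\}$ with $\emptyset \neq S \subseteq B$, we need $F \in \calF$ with $F \cap B = S$ and $b \in F$.

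The main obstacle is this last case: cofinality gives some $F \supseteq F^B_S \cup \{b\}$ but does not control whether $F$ contains additional elements of $B \setminus S$. The plan to resolve it is to strengthen the induction hypothesis so that each pattern-subfamily $\{F \in \calF : R \subseteq F,\ F \cap B = S\}$ is cofinal on $X \setminus (B \cup R)$, whereupon cofinality applied to $\{b\}$ within the subfamily supplies the required witness. This strengthening is to be obtained by combining the standard induction with the $\Delta$-system structure at scale $\aleph_n$: the uniform root $R$ and pairwise disjoint petals effectively separate witnesses for distinct patterns and preserve cofinality within each pattern-subfamily on $X \setminus (B \cup R)$.
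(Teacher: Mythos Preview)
Your argument is incomplete, and the gap you yourself flag at the end is real and not closed by the sketch you offer.

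The difficulty is exactly where you locate it: having found a shattered $n$-set $B \subseteq Y$ and a new point $b$, you need for each nonempty $S \subsetneq B$ a witness $F \in \calF$ with $F \cap B = S$ and $b \in F$. Your proposed fix is to strengthen the induction hypothesis so that each pattern-subfamily $\{F \in \calF : R \subseteq F,\ F \cap B = S\}$ is cofinal on $X \setminus (B \cup R)$. But this cannot come out of the induction as you have set it up: the inductive hypothesis is applied to the restricted family on $Y$ with $|Y| = \aleph_{n-1}$, and can at best yield cofinality of pattern-subfamilies \emph{within $Y$}. You need cofinality on all of $X \setminus (B \cup R)$, of size $\aleph_n$, in order to absorb the new point $b \in J \setminus J_0$. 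Nothing in the $\Delta$-system step bridges this: the $\Delta$-system controls only the particular sets $F_\xi$ you selected, not arbitrary members of $\calF$, so it says nothing about cofinality of the pattern-subfamilies. The induction does not close. As a sanity check on the strengthening itself, already on $X = \omega$ with $\calF$ the family of initial segments $\{0,\ldots,m\}$, no singleton $\{b\}$ has $\{F \in \calF : b \notin F\}$ cofinal, so the strengthened base case is simply false.

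The paper's proof takes a different route that avoids the $\Delta$-system entirely. Write $A \vdash B$ for ``every $D \in \calF$ containing $A$ meets $B$''. The key observation is: if $A \not\vdash B$, then only finitely many $c$ satisfy $A \vdash B \cup \{c\}$ (any witness $D \supseteq A$ disjoint from $B$ confines such $c$ to $D$). One then builds $c_n, c_{n-1}, \ldots, c_0$ by \emph{downward} induction with $c_k \in \aleph_k$, maintaining the invariant $(+)_k$: for every partition $\{c_{k+1},\ldots,c_n\} = A \sqcup B$ and every choice of $b_i \in \aleph_i$ for $i \leq k$, one has $\{b_0,\ldots,b_k\} \cup A \not\vdash B$. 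Passing from $(+)_k$ to $(+)_{k-1}$ only requires avoiding, for each of the at most $\aleph_{k-1}$ tuples $(b_0,\ldots,b_{k-1})$ and finitely many partitions, a finite bad set; since $\aleph_k > \aleph_{k-1}$ a good $c_k$ exists. The terminal condition $(+)_{-1}$ is exactly the statement that $\{c_0,\ldots,c_n\}$ is shattered. The universal quantifier over the $b_i$ in $(+)_k$ is the correct strengthening: weaker than full cofinality of pattern-subfamilies, but strong enough to propagate downward.
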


\begin{proof}
We may assume $X = \aleph_n$.

For finite subsets $A,B \subseteq X$, write
$A \vdash B$ to mean that if $D \in \calF$ contains $A$ then $D \cap B \neq \emptyset$, and write $A \not \vdash B$ for the negation of this.

\begin{observation} \label{obs: does not imply emptset}
  We have $A \not \vdash \emptyset$ for any $A$, since $\calF$ is cofinal.
\end{observation}

\begin{observation} \label{obs:only finite}
  If $A \not \vdash B$, then there are only finitely many $c \in X$ such that
$A \vdash B\cup\{c\}$.
(Indeed, if $D \in \calF$ witnesses $A \not \vdash B$, then we must have $c \in D$.)
\end{observation}

\begin{observation} \label{obs:monotone}
  If $A' \subseteq A$ and $A \not \vdash B$ then $A' \not \vdash B$.
\end{observation}

We find $c_i \in \aleph_i$ for $0\leq i \leq n$ by downwards induction such that for $k = n,...,0,-1$:

\noindent\hypertarget{indImp}{(+)$_k$} $\left\{
\begin{minipage}{0.8\textwidth}
  for any partition $c_{>k} = A \cup B$ where $A,B$ are disjoint and any $b_i \in \aleph_i$ for $i \leq k$,
    \[b_{\leq k} \cup A \not \vdash B.\]
\end{minipage}
\right.$

\hyperlink{indImp}{(+)$_n$} holds by \cref{obs: does not imply emptset}.

\hyperlink{indImp}{(+)$_{-1}$} means that $\set{c_i}{i<n+1}$ is shattered, from which we conclude.

Suppose \hyperlink{indImp}{(+)$_k$} holds, we choose $c_k \in \aleph_k$ such that (+)$_{k-1}$ holds.

Such a $c_k$ exists because for each of the $\aleph_{k-1}$ choices for $b_{<k}$ and $A$, there are only finitely many choices to rule out. More explicitly, for every choice of $b_{<k}$ as above, and any $A \subseteq c_{>k}$ such that $b_{<k} \cup A \not \vdash c_{>k} \setminus A$, let $s_{b_{<k},A} = \set{c\in \aleph_k}{b_{<k} \cup A \vdash c_{>k} \setminus A \cup \{c\}}$. By \cref{obs:only finite}, $s_{b_{<k},A}$ is finite for each such $b_{<k},A$, and let $c_k \in \aleph_k \setminus (\bigcup\set{s_{b_{<k},A}}{b_{<k} \cup A \not \vdash c_{>k} \setminus A} \cup c_{>k})$.

\hyperlink{indImp}{(+)$_{k-1}$} holds: if $c_k \in A$ then we are done by induction, and otherwise $c_k \in B$ and this follows from \cref{obs:monotone} and the choice of $c_k$.
\end{proof}

\begin{remark} \label{rem:aleph_alpha+omega}
  With the same proof \emph{mutatis mutandis} one can see that if $\calF$ is an $\omega$-cofinal family of subsets of $X$, each of size $<\aleph_\alpha$, and if $|X| \geq \aleph_{\alpha+n}$, then $\calF$ has VC-dimension $>n$.
\end{remark}

\begin{theorem} \label{thm:cofinal family of VC dim 2}
  There is a cofinal family $\calF$ of finite subsets of $\aleph_1$ of VC-dimension 2.
\end{theorem}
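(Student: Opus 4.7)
The plan is to build $\calF$ out of enumerations of countable ordinals. For each infinite countable $\alpha$ I fix a bijection $g_\alpha\colon\omega\to\alpha$ and set $F^\alpha_n := g_\alpha[\{0,\dots,n-1\}]$, then take $\calF := \{F^\alpha_n : \omega\le\alpha<\aleph_1,\ n<\omega\}$. Cofinality of $\calF$ is immediate: given a finite $A\subseteq\aleph_1$, choose any countable $\alpha>\max A$; then $A\subseteq\alpha=\bigcup_n F^\alpha_n$, so $A\subseteq F^\alpha_n$ as soon as $n$ exceeds each $g_\alpha^{-1}(a)$ for $a\in A$. All the content of the theorem is in choosing the bijections $g_\alpha$ so that $\calF$ shatters no 3-element subset.

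Fix a triple $\{a<b<c\}$. I would first catalogue the subsets of $\{a,b,c\}$ realizable as $F^\alpha_n\cap\{a,b,c\}$, splitting by where $\alpha$ sits: for $\alpha\le a$ only $\emptyset$ appears; for $a<\alpha\le b$ one adds $\{a\}$; for $b<\alpha\le c$ the realized subsets are the $g_\alpha$-initial segments of $\{a,b\}$, namely $\emptyset,\{a,b\}$ and either $\{a\}$ or $\{b\}$ depending on which of $a,b$ is $g_\alpha$-first; and for $\alpha>c$ the realized subsets are the four $g_\alpha$-initial segments of $\{a,b,c\}$, determined by the order $\prec_\alpha|_{\{a,b,c\}}$. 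The aim is to design the $g_\alpha$ so that, over all $\alpha$, the union of all these realized subsets omits at least one of the eight subsets of $\{a,b,c\}$.

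The natural approach is a coherent transfinite recursion: define the $g_\alpha$ using a fixed auxiliary combinatorial gadget on $\aleph_1$ — for instance a ladder system $(C_\alpha)_{\alpha<\aleph_1}$ with $C_\alpha$ cofinal in $\alpha$ of order type $\le\omega$ — so that the induced orderings $\prec_\alpha$ enjoy strong compatibility. The hope is that such coherence ensures that for every triple $\{a<b<c\}$ only a restricted collection of orders on $\{a,b,c\}$ appears as $\prec_\alpha|_{\{a,b,c\}}$ varies with $\alpha>c$, from which a combinatorial argument extracts an unrealized subset. An important caveat is that the missing subset must be allowed to depend on the triple: a quick check shows that no single pattern (say, always missing $\{c\}$) can work simultaneously for every triple, because e.g.\ for the triple $\{a,b,\omega\}\subseteq\aleph_1$ the condition ``only one element below $\omega$ is $g_\alpha$-later than $\omega$'' is incompatible with $g_\alpha\colon\omega\to\alpha$ being a bijection whenever $\alpha>\omega$.

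The main obstacle is the complete absence of slack: \cref{prop:aleph_omega good enough} already forces VC-dimension $>1$ on $\aleph_1$, so the construction sits exactly on the boundary and every detail of the enumerations must be chosen correctly. I expect a naive coherent construction to shatter some triple, so the correct design will need careful bookkeeping — possibly in the spirit of walks on ordinals — to simultaneously handle all uncountably many triples while preserving the cofinality of $\calF$. The hardest step will be the combinatorial verification that the resulting family indeed leaves an unrealized subset at every triple.
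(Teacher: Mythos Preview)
Your proposal is a plan, not a proof: the central step --- actually constructing the bijections $g_\alpha$ so that no triple is shattered --- is entirely missing. You correctly identify this as the crux, and you correctly note that a naive coherent choice will fail, but you stop precisely at the point where the real work begins. Gesturing at ladder systems or walks on ordinals does not discharge the obligation; you would still need to specify the recursion and then verify, for every triple $\{a<b<c\}$ and every $\alpha>c$, that the orderings $\prec_\alpha|_{\{a,b,c\}}$ collectively avoid some subset. As written there is nothing to evaluate beyond the framework.

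It is worth seeing how the paper sidesteps exactly the difficulty you isolate. Rather than taking $\calF$ to be the initial segments $F^\alpha_n$ (where cofinality is free and VC-dimension~$2$ is the hard part), the paper defines a ternary relation $\alpha,\beta\vdash\gamma$ (meaning $\beta,\gamma<\alpha$ and $\gamma<^\alpha\beta$) and lets $\calF$ be the family of all \emph{$\vdash$-closed} finite sets. This inverts the difficulty: VC-dimension~$2$ is now automatic, since any triple $\{\alpha_0,\alpha_1,\alpha_2\}$ admits a permutation with $\alpha_{\sigma(0)},\alpha_{\sigma(1)}\vdash\alpha_{\sigma(2)}$, so the pair $\{\alpha_{\sigma(0)},\alpha_{\sigma(1)}\}$ is never an intersection with a closed set. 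All the work goes into cofinality, which the paper secures by a transfinite induction on $\alpha$ building the orders $<^\alpha$ so that every finite $A\subseteq\alpha$ sits inside a finite $\vdash$-closed set; the successor step is handled by placing $\alpha$ first in $<^{\alpha+1}$, and the limit step by writing $\eta$ as an increasing union of finite closed sets and choosing $<^\eta$ to make each of them an initial segment. The moral is that enlarging $\calF$ from ``initial segments of a single ordering'' to ``closed under a relation derived from all the orderings'' is what makes the VC bound fall out for free.
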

\begin{proof}
  Let $\delta\leq \omega_1$. Suppose that $\mathcal{C} = \sequence{\mathord{<^\alpha}}{\alpha<\delta}$ is a sequence of linear orders, where $<^\alpha$ is a linear order on $\alpha$. We define the following relation on triples $\alpha,\beta,\gamma<\delta$: $\alpha, \beta \vdash_{\mathcal{C}} \gamma$ iff 
  $\beta,\gamma<\alpha$ and $\gamma<^\alpha \beta$.
  Say $B \subseteq \delta$
  is \defn{$\vdash_{\mathcal{C}}$-closed} if for any $\alpha,\beta\in B$, if $\alpha,\beta 
  \vdash_{\mathcal{C}} \gamma$ then $\gamma\in B$.

  We inductively define well-orders $<^\alpha$ on $\alpha < \omega_1$ such that
  \\ \noindent\hypertarget{cof}{$(*)_\alpha$} 
  any finite subset $A \subseteq \alpha$ extends to a finite 
  subset $A \subseteq B \subseteq \alpha$ such that $B\cup\{\alpha\}$ is $\vdash_{\mathcal{C}_\alpha}$-closed for $\mathcal{C}_{\alpha}:=\sequence{<^\beta}{\beta\leq \alpha}$.

  \hyperlink{cof}{$(*)_0$} holds with $<^0$ the empty order.

  Suppose \hyperlink{cof}{$(*)_\alpha$} holds.
  Let $<^{\alpha+1}$ be the order obtained from $<^\alpha$ by putting $\alpha$ 
  at the start: $\mathord{<^{\alpha+1}} = \mathord{<^\alpha} \cup {\set{(\alpha,\beta)}{\beta<\alpha}}$. 
  Let $A \subseteq \alpha+1$.
  By \hyperlink{cof}{$(*)_\alpha$}, let $B \subseteq \alpha$ be a finite set containing $A \setminus 
  \{\alpha\} \subseteq \alpha$ such that $B' := B \cup \{\alpha\}$ is 
  $\vdash_{\mathcal{C}_{\alpha}}$-closed.
  Then it follows from the definition of $<^{\alpha+1}$ that also $B' \cup 
  \{\alpha+1\}$ is $\vdash_{\mathcal{C}_{\alpha+1}}$-closed. Since $B'$ is finite and contains $A$, we 
  conclude that \hyperlink{cof}{$(*)_{\alpha+1}$} holds.

  Suppose that $\eta < \omega_1$ is a limit ordinal and \hyperlink{cof}{$(*)_\alpha$} holds for all $\alpha < \eta$. Note that for $\alpha<\beta<\eta$ and any $B\subseteq \alpha$, $B$ is $\vdash_{\mathcal{C}_\alpha}$-closed iff $B$ is $\vdash_{\mathcal{C}_\beta}$-closed. 

  Since $\eta$ is countable, it follows that $\eta = \bigcup_{n \in \omega} 
  S_n$ where for each $n<\omega$, $S_n$ is finite, $S_n \subseteq S_{n+1}$ and $S_n$ is $\vdash_{\mathcal{C}_\alpha}$-closed for any (some) $\alpha<\eta$ such that $S_n \subseteq \alpha$. (In the construction, given $S_n$, let $S_n' = S_n \cup \{\beta_n\}$ where $\sequence{\beta_n}{n<\omega}$ enumerates $\eta$ and let $S_{n+1}$ be finite and $\vdash_{\mathcal{C}_{\alpha}}$-closed containing $S_n'$ for $\alpha<\eta$ such that $S_n'\subseteq \alpha$.) We define $<^\eta$ to be of order 
  type $\omega$ in such a way that each $S_n$ is an initial segment.
  Then $(*)_\eta$ holds: if $A$ is a finite subset of $\eta$, then $A$ is 
  contained in some $S_n$ which is finite and $\vdash_{\mathcal{C}_\alpha}$-closed for any $\alpha$ large enough, and since 
  $S_n$ is an initial segment of $<^\eta$, $S_n\cup\{\eta\}$ is 
  $\vdash_{\mathcal{C}_\eta}$-closed.

  Finally, let $\mathcal{C} = \sequence{\mathord{<^\alpha}}{\alpha<\omega_1}$ and $\mathord{\vdash} = \mathord{\vdash}_{\mathcal{C}}$. Let $\calF$ be the family of finite subsets of $\omega_1$ which are $\vdash$-closed. By the above construction, $\calF$ is cofinal. As for any triple 
  $\alpha_0,\alpha_1,\alpha_2<\omega_1$ of distinct ordinals there is some 
  permutation $\sigma$ of 3 such that 
  $\alpha_{\sigma(0)},\alpha_{\sigma(1)}\vdash \alpha_{\sigma(2)}$, $\calF$ 
  does not shatter any set of size 3.
\end{proof}

\begin{corollary}
  The following statement is independent of ZFC: there is an NIP cofinal family of finite subsets of $2^{\aleph_0}$.
\end{corollary}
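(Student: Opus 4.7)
The plan is straightforward: both halves of independence follow directly from results already established in this section, combined with the consistency (with ZFC) of different values of the continuum.

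First I would dispatch the consistency of the statement itself. Under CH, $2^{\aleph_0} = \aleph_1$, and \cref{thm:cofinal family of VC dim 2} produces a cofinal family of finite subsets of $\aleph_1$ of VC-dimension $2$; in particular it is a VC-class, hence NIP. Since ZFC $+$ CH is consistent (e.g., it holds in $L$), this gives one direction of the independence.

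For the consistency of the negation, I would pass to a model of ZFC in which $2^{\aleph_0}$ is sufficiently large. By König's theorem $\mathrm{cf}(2^{\aleph_0}) > \aleph_0$, so one cannot arrange $2^{\aleph_0} = \aleph_\omega$; however $2^{\aleph_0} = \aleph_{\omega+1}$ (or any larger regular value consistent with König) is consistent with ZFC by a standard Cohen forcing argument. In any such model $|2^{\aleph_0}| \geq \aleph_\omega$, and \cref{prop:aleph_omega good enough} shows that every cofinal family of finite subsets of a set of size $\geq \aleph_\omega$ has VC-dimension $> n$ for every $n < \omega$, hence infinite VC-dimension, i.e., IP. So in such a model there is no NIP cofinal family of finite subsets of $2^{\aleph_0}$.

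The main obstacle is essentially non-existent: both halves are immediate consequences of the theorems proved earlier in the section, together with classical set-theoretic consistency results. The only subtlety worth flagging is precisely the König point — the ``large continuum'' hypothesis must be phrased as $2^{\aleph_0} \geq \aleph_{\omega+1}$ rather than $2^{\aleph_0} = \aleph_\omega$ — but this changes nothing for the application of \cref{prop:aleph_omega good enough}, which only needs $|X| \geq \aleph_\omega$.
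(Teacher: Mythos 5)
Your proof is correct and follows exactly the same approach as the paper: CH plus \cref{thm:cofinal family of VC dim 2} for one direction, and a model with $2^{\aleph_0} > \aleph_\omega$ plus \cref{prop:aleph_omega good enough} for the other. The remark about König's theorem is a harmless elaboration; the paper simply states that $\aleph_\omega < 2^{\aleph_0}$ is consistent, which already sidesteps the issue.
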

\begin{proof}
  On the one hand CH is consistent with ZFC (by G\"odel's theorem, see e.g., \cite[Theorem 13.20]{jech}), and on the other hand it is consistent with ZFC that $\aleph_\omega < 2^{\aleph_0}$ (using Cohen forcing, see e.g., \cite[Chapter 15, ``Cohen Reals'']{jech}). Thus, the statement follows from \cref{prop:aleph_omega good enough,thm:cofinal family of VC dim 2}.
\end{proof}

\begin{question}
  Is there a cofinal family of finite subsets of $\aleph_2$ of VC-dimension 3? More generally: is the bound in \cref{prop:aleph_omega good enough} tight, or can we improve $\aleph_\omega$ to a smaller cardinal?
\end{question}

\section{NIP and cofinal families of finite subsets of an uncountable set}\label{sec:NIP and cofinal families}
This section is devoted to proving the following theorem.

\begin{theorem}\label{thm:cofinal family implies IP}
  Suppose that $\kappa$ is an infinite cardinal, $|X| \geq \kappa^+$, and $\calF$ is an $\omega$-cofinal family of subsets of $X$, each of size $<\kappa$. Then $(X,\calF;\mathord{\in})$ has IP (as a two-sorted structure whose only relation is $\mathord{\in} \subseteq X \times \calF$).
\end{theorem}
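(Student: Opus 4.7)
Assume for contradiction that $M := (X, \calF; \mathord{\in})$ is NIP. The strategy is to extract, from the uniform honest definitions (Fact~\ref{fac: uniform of HD}), a uniformly definable $\omega$-cofinal subfamily of $\calF$ whose VC-dimension can be forced arbitrarily high by the quantitative bound of Remark~\ref{rem:aleph_alpha+omega}, contradicting NIP. The bridge between ``finite VC-dimension of any single formula'' (consequence of NIP) and ``$\omega$-cofinal small-set families with growing VC-dimension'' (consequence of Remark~\ref{rem:aleph_alpha+omega}) is a passage to a sufficiently large elementary extension.

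Apply Fact~\ref{fac: uniform of HD} to the partitioned formula $\phi(x,y) \equiv x \in y$ in $M$, where $x$ has sort $X$ and $y$ has sort $\calF$. This yields an $\L$-formula $\psi(x, \bar z)$, with $\bar z$ a tuple of variables of sort $X$, serving as a uniform honest definition of every $\phi^{\opp}$-type. Setting the parameter set $A = X$ in Definition~\ref{def:honest definitions}: for every $F \in \calF$ and every finite $A_0 \subseteq X$ there exists $\bar c \in X^{|\bar z|}$ such that
\[A_0 \cap F \;\subseteq\; \psi(X, \bar c) \;\subseteq\; F.\]
Let $\calU := \{\psi(X, \bar c) : \bar c \in X^{|\bar z|}\}$. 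A direct check gives: (i) $\calU$ is $\omega$-cofinal on $X$, using the $\omega$-cofinality of $\calF$ to pick $F \supseteq A_0$ and then the honest definition to produce $\bar c$ with $A_0 = A_0 \cap F \subseteq \psi(X, \bar c)$; (ii) each $U \in \calU$ is contained in some $F \in \calF$ and so has cardinality $< \kappa$; (iii) $\calU$ is uniformly definable in $M$ by the single formula $\psi$ whose parameters all live in sort $X$.

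Let $d := \VC(\psi)$, finite by NIP of $M$, so $\VC(\calU) \leq d$. Writing $\kappa = \aleph_\alpha$, Remark~\ref{rem:aleph_alpha+omega} applied to $\calU$ would force $\VC(\calU) > d$, contradicting NIP, provided $|X| \geq \aleph_{\alpha + d + 1}$. To boost $|X|$, pass to a $\kappa^+$-saturated elementary extension $M^* = (X^*, \calF^*; \mathord{\in}) \succeq M$ with $|X^*| \geq \aleph_{\alpha + d + 1}$. By elementarity, $\psi$ still has VC-dimension $\leq d$ in $M^*$, the induced family $\calU^* := \{\psi(X^*, \bar c) : \bar c \in (X^*)^{|\bar z|}\}$ is $\omega$-cofinal on $X^*$ (a first-order scheme), and each $U \in \calU^*$ is contained in some element of $\calF^*$ (first-order).

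The main obstacle is that the size bound ``$|F| < \kappa$'' is \emph{not} first-order, so one cannot directly conclude that members of $\calU^*$ have size $< \kappa$: an $F \in \calF^*$ may have $|F| \geq \kappa$ in the extension. Overcoming this is the crux of the argument. A natural plan is to restrict attention to the subfamily of $\calU^*$ whose parameters $\bar c$ come from a small elementary submodel of $M^*$ extending the original $M$, forcing the containing $F$'s to lie in the original $\calF$ and hence to have external size $< \kappa$, and then to verify that this restricted subfamily remains $\omega$-cofinal on a carefully chosen subset of $X^*$ of cardinality $\aleph_{\alpha + d + 1}$. Once the size bound is so recovered, Remark~\ref{rem:aleph_alpha+omega} delivers $\VC > d$, contradicting the VC bound on $\psi$. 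The delicate technical work needed to justify this size-recovery step is the principal difficulty of the proof.
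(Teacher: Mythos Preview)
Your plan has a genuine gap at exactly the point you flag, and I do not see how it can be filled. Passing to an elementary extension $M^*$ with $|X^*| \geq \aleph_{\alpha+d+1}$ irretrievably loses the size bound on members of $\calF$: even for $\bar c \in M$ with $\psi(X,\bar c) \subseteq F$ for some $F \in \calF$, elementarity only yields $\psi(X^*,\bar c) \subseteq \{x \in X^* : M^* \models x \in F\}$, and this extension of $F$ in $M^*$ can have size $\geq \kappa$, since ``$|F| < \kappa$'' is not first-order. So restricting parameters to a small submodel does not make the instances small on $X^*$. Conversely, restricting to a subset $Y \subseteq X^*$ on which the traces are forced to be small (an $M$-indiscernible sequence, say) kills $\omega$-cofinality: each instance over $M$ then traces to all of $Y$ or to a finite set, and the finite traces cannot cover all finite subsets of $Y$. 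With only $|X| = \kappa^+$ in hand, \cref{rem:aleph_alpha+omega} gives merely $\VC(\calU) > 1$, which is no contradiction; the whole content of the theorem is that the single jump to $\kappa^+$ already suffices, and this is precisely what the extension shortcut cannot exploit. (There is also a minor slip earlier: claim~(ii) is false as stated, since an arbitrary instance $\psi(X,\bar c)$ need not lie in any $F \in \calF$ --- only the particular $\bar c$'s produced by the honest definition do. This is repaired by restricting to the definable set of $\bar c$ with $\exists F \in \calF\ \forall x\,(\psi(x,\bar c) \to x \in F)$, but the main obstacle above remains.)

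The paper's proof is entirely different and does not use honest definitions or elementary extensions. It works directly in $X$ of size $\kappa^+$: assuming $\VC(\calF) = n$, it writes down an explicit relation $R \subseteq X^{n+1}$, definable in $(X,\calF;\mathord\in)$, encoding the failure of $\calF$ to shatter $(n{+}1)$-sets, and reduces to a purely combinatorial statement (\cref{lem:the technical lemma}): if $R \subseteq X^n$ has every section $R(\,\cdot\,,a_1,\ldots,a_{n-1})$ of size $<\kappa$ and every $n$-element subset of $X$ realises $R$ under some ordering of its elements, then some partition of the variables of $R$ has IP. That lemma is the real work; it is proved by a downward induction on $k \in [2,n]$ that builds nested systems of cofinal subsets of $\kappa^+$ tracking potential IP witnesses, bottoming out in the two-variable pigeonhole of \cref{rem:n=2}.
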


The proof relies upon the following lemma.

\begin{lemma} \label{lem:the technical lemma}
  Let $\kappa$ be any infinite cardinal. Assume that:
  \begin{enumerate}
    \item $|X|\geq \kappa^+$.
    \item $R \subseteq X^n$ and $1\leq n$.
    \item For every $a_1,\dots,a_{n-1}\in X$, $|\set{a_0 \in X}{R(a_0,a_1,\dots,a_{n-1})}|<\kappa$.
    \item For every set $A \subseteq X$ of size $|A|=n$, for some $a\in A$ and some tuple $\bar{a}\in (A\setminus{a})^{n-1}$, $R(a,\bar{a})$ holds.
  \end{enumerate}
  Then, there is some partition of $\{1,\dots,n-1\}$ into nonempty disjoint sets $u,v$ such that, letting $x: = \sequence{x_i}{i\in u\cup \{0\}}$ and $y := \sequence{x_i}{i\in v}$, the partitioned formula $\phi(x,y):= R(x_{0},x_1, \dots, x_{n-1})$ has IP.
\end{lemma}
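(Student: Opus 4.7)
My plan is to argue by induction on $n$. Since the conclusion demands a partition of $\{1, \ldots, n-1\}$ into two nonempty pieces, it only makes sense for $n \geq 3$; the cases $n \in \{1, 2\}$ are handled by observing that the hypotheses themselves become inconsistent. For $n = 1$, (3) gives $|R| < \kappa$ while (4) forces $R = X$, contradicting $|X| \geq \kappa^+$. For $n = 2$, I use (4) to pick an orientation of every pair of $X$, producing a tournament $T$ on $X$ whose in-degrees are bounded by $\kappa$ by (3). A transfinite recursion builds a transitive subtournament $(v_\alpha)_{\alpha \leq \kappa}$: at each stage $\alpha \leq \kappa$, the set of vertices blocked by the chain (in-neighbors of some $v_\beta$ for $\beta < \alpha$) has cardinality at most $|\alpha| \cdot \kappa = \kappa < \kappa^+$, so some $v_\alpha$ extends it; but then $v_\kappa$ has at least $\kappa$ in-neighbors in $T$, contradicting (3).

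For $n \geq 3$ the key move is to slice $R$ along a non-zero coordinate: given $b \in X$ and $i \in \{1, \ldots, n-1\}$, set
\[ R_{b,i}(x_0, x_1, \ldots, x_{n-2}) := R(x_0, x_1, \ldots, x_{i-1}, b, x_i, \ldots, x_{n-2}), \]
an $(n-1)$-ary relation which automatically inherits hypothesis (3). If I can locate $b$, $i$, and a set $Y \subseteq X \setminus \{b\}$ with $|Y| \geq \kappa^+$ on which $R_{b,i}$ also satisfies (4), then for $n \geq 4$ the inductive hypothesis yields a partition $u' \sqcup v' = \{1, \ldots, n-2\}$ into two nonempty pieces making $R_{b,i}(x_0, x_{u'}; x_{v'})$ have IP; lifting is immediate — reindex and insert the constant $b$ at position $i$ in the parameter tuples — to deliver IP for $R$ under the partition of $\{1, \ldots, n-1\}$ corresponding to $(u', v' \cup \{i\})$.

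The inductive reduction collapses at $n = 3$ (slicing bottoms out at arity $2$, where the hypotheses are already inconsistent), so for the base case I would argue directly: using Ramsey on $3$-subsets with $2^6$ colors (indicating which subset of $S_3$ of permutations verifies $R$), extract a countable monochromatic $Y \subseteq X$ on which every $3$-subset $\{a < b < c\}$ is witnessed by a common nonempty set $\Sigma \subseteq S_3$, and then case-analyse the possible $\Sigma$ (using (3) to rule out the trivial patterns) to build the IP pattern for one of the two available partitions $(\{1\}; \{2\})$ or $(\{2\}; \{1\})$.

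The main obstacle for $n \geq 4$ is producing the triple $(b, i, Y)$. For each $(n-1)$-subset $A \subseteq X$ and each $b \in X \setminus A$, applying (4) to $A \cup \{b\}$ yields a witness placing $b$ at some position $\iota(A, b) \in \{0, \ldots, n-1\}$; hypothesis (3) limits to at most $(n-1)! \cdot \kappa < \kappa^+$ the number of $b$'s with $\iota(A, b) = 0$ per fixed $A$ (such $b$ must lie in some fiber $R_{\bar{a}}$ for an ordering $\bar{a}$ of $A$), so at least $\kappa^+$ values of $b$ place $b$ into $\{1, \ldots, n-1\}$. Promoting this local bound to a single $(b, i, Y)$ with $|Y| = \kappa^+$ such that every $(n-1)$-subset of $Y$ admits a witness placing $b$ at position $i$ is the delicate point — Erdős–Rado-type partition relations on $[X]^{n-1}$ are unavailable at $\kappa^+$ — and I expect this step to proceed via a transfinite construction of $Y$ coupled to the choice of $b$, using the fiber bound from (3) to close off bad configurations at each stage.
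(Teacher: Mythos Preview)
There are two genuine gaps, and the first is fatal to the approach as stated.

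\textbf{The $n=3$ base case via Ramsey does not work.} Passing to a countable Ramsey-homogeneous $Y$ can kill IP entirely. Take the relation of \cref{exa:Omer's example}: $R(\gamma,\beta,\alpha)$ holds iff $\gamma,\beta<\alpha$ and $\gamma<^\alpha\beta$, where each $<^\alpha$ is an $\omega$-order on $\alpha$. For a triple $\gamma<\beta<\alpha$ the only permutations that can satisfy $R$ are $(\gamma,\beta,\alpha)$ and $(\beta,\gamma,\alpha)$, and exactly one of them does. So after Ramsey you land on a countable $Y$ on which $R$ restricts to ``$x_0<x_1<x_2$'' (or ``$x_1<x_0<x_2$'') in the induced linear order --- a conjunction of order relations, hence NIP for \emph{both} partitions of $\{1,2\}$. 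Condition (3) does not rule this pattern out: if $Y$ has order type $\omega$ then each fibre $\{a\in Y:a<b\}$ is finite. The IP witnesses for this $R$ are a genuinely uncountable phenomenon and cannot be located inside any countable subset, so the case analysis you propose has no purchase.

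\textbf{The inductive slice is not just delicate --- it is the whole difficulty.} You need a single $b$ such that $R_{b,i}$ inherits (4) on a set of size $\kappa^+$. Your transfinite sketch would have to close off, at each stage, the bad $(n-1)$-subsets of the partial $Y$; but those are not controlled by a $<\kappa$ fibre bound once $b$ is fixed in advance, and there is no Erd\H{o}s--Rado at $\kappa^+$ to fall back on. The paper's proof is organised precisely to avoid committing to one slice: it assumes all partitions are NIP and runs a \emph{downward} induction on an auxiliary parameter $k$ from $n$ to $2$, replacing ``fix $b$'' by ``for cofinally many $b$'' (encoded in the sets $s^k_t$ and the cofinal systems $D^{k,j,\bar c}_{\bar A}$), and extracting a contradiction at $k=2$ via the argument you gave for $n=2$. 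The elaborate bookkeeping there is exactly the price of not being able to find your triple $(b,i,Y)$.
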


\begin{remark}
  \cref{lem:the technical lemma} does not hold if we replace $\kappa^+$ by $\kappa$ in (1). Indeed, let $X = \omega$ and let $R(x,y) = (x<y)$. Then (2)--(4) hold for $\kappa = \aleph_0$ and $n=2$ but $R$ is NIP (by e.g., \cite[Proposition A.2]{simon-NIP}).  
\end{remark}

\begin{remark}\label{rem:n=2}
  Note that conditions (1)--(4) imply that $n>2$. If $n=1$ then by (3), $R$ defines a set of size $<\kappa$, but by (4), $R$ contains $X$, contradicting (1).
  Suppose that $n=2$ and for $a \in X$ let $s_a = \set{b \in X}{R(b,a)}$.
  Let $X_0, X_1 \subseteq X$ be such that $X_0 \cap X_1 =\emptyset$, $|X_0|=\kappa$ and $|X_1|=\kappa^+$. Let $S = \bigcup\set{s_a}{a\in X_0}$. As $|S|\leq \kappa$, there must be some $b\in X_1\setminus S$. As $|s_b|<\kappa$, there must be some $a \in X_0 \setminus s_b$. Then $a \notin s_b$ and $b \notin s_a$, contradicting (4).
\end{remark}

The following example shows that the conditions of \cref{lem:the technical lemma} can hold when $n=3$.
\begin{example}\label{exa:Omer's example}
  Suppose that for each $\alpha<\omega_1$, $<^\alpha$ is a well order on $\alpha$ of order type $\omega$. For $\alpha,\beta,\gamma <\omega_1$, let $R(\gamma, \beta, \alpha)$ hold iff $\gamma,\beta <\alpha$ and $\gamma <^\alpha \beta$. Then $R$ satisfies the conditions of \cref{lem:the technical lemma} with $\kappa = \aleph_0$.
\end{example}

\begin{remark}
  In essence, the proof of \cref{lem:the technical lemma} is an induction on $n$, with \cref{rem:n=2} as the base case. However, we need to keep track of sets witnessing IP ($D^{k,j,\bar{c}}_\barA$ in the proof below), which substantially complicates the proof.
\end{remark}

\begin{proof}[Proof of \cref{lem:the technical lemma}]
  Assume not, i.e., that
  \begin{enumerate}
    \item[(5)] for any partition of $\{1,\dots,n-1\}$ into nonempty disjoint sets $u,v$, letting $x: = \sequence{x_i}{i\in u\cup \{0\}}$ and $y := \sequence{x_i}{i\in v}$ the partitioned formula $\phi(x,y):= R(x_{0},x_1, \dots, x_{n-1})$ is NIP.
  \end{enumerate}

  Define $R' \subseteq X^n$ by $R'(a_0,\dots,a_{n-1})$ iff for some tuple $\bar{a} \in \{a_1,\dots,a_{n-1}\}^{n-1}$, $R(a_0,\bar{a})$ holds.
  Note that $R'$ satisfies (2)--(5) above (it satisfies (5) as a finite disjunction of NIP relations; in fact (5) can now be simplified by saying that $R'(x_{0},\dots,x_{k-1};x_{k},\dots,x_{n-1})$ is NIP for any $1<k<n$). Thus, we can replace $R$ with $R'$ and assume in addition that
  \begin{enumerate}
    \item[(6)] For any tuple $\bar{a} \in X^{n-1}$ and for any permutation $\bar{a}'$ of $\bar{a}$, $R(X,\bar{a})=R(X,\bar{a}')$.
  \end{enumerate}

  For any nonempty set $t \subseteq X$ of size $\leq n-1$, let $s_t = R(X,\bar{a})$ where $\bar{a}$ is any enumeration of $t$ of length $n-1$; 
  this is well-defined by (6). We can then restate (4) as:
  \item [$\boxtimes$] For every $t$ of size $n$, for some $a\in t$, $a \in s_{t \setminus \{a\}}$.

  We may assume that $|X|=\kappa^+$, and even that $X = \kappa^+$. When we say that a subset of $X$ is ``cofinal'' or ``contains an end segment of some cofinal set'', we mean with respect to the canonical order on $\kappa^+$. For a cofinal set $D \subseteq X$ and some property $P \subseteq X$, write $\forall^* x\in D \ P(x)$ to mean that $P$ contains an end segment of $D$.
  By downwards induction on $k \in [2,n]$ (note that by \cref{rem:n=2}, $n>2$, so this range for $k$ makes sense), we will find:
  \begin{enumerate}[(A)]
    \item $m_k < \omega$, and
    \item a cofinal set $D^{k,j,\bar{c}}_\barA \subseteq X$
      for every $j \in [k,n)$ and $\bar{c}\in X^{j-k}$ and $\barA \in \prod_{i\in [k,j]}{\SS(m_i)}$, 
  \end{enumerate}
  such that $\boxplus_k$ and $\boxtimes_k$ below hold. To state these conditions, we first introduce some additional notation:

\begin{itemize}
  \item For $l,j$ with $k \leq l \leq j \leq n$, let $M_l^j := \prod_{i \in [l,j]}{\SS(m_i)}$. We denote elements of $M_l^j$ by $(j+1-l)$-tuples $\barA$. For $j<l$, set $M_l^j := \{\emptyset\}$.

  \item For $j \in [k,n]$, $t\subseteq X$ of size $k-1$, $\bar{c}\in X^{j-k}$, and $\barA \in M_k^{j-1}$, define sets $s^{k,j,\bar{c},\barA}_t$ as follows. We set
       $s^{k,n,\bar{c},\barA}_t =s_{t \cup \bar{c}}$, and then define recursively for $j \in [k,n)$:
        \[ s^{k,j,\bar{c},\barA}_t=\set{a\in X}{\exists A \subseteq m_j \ \forall^* c\in D^{k,j,\bar{c}}_{\barA A} \ a\in s^{k,j+1,\bar{c}c,\barA A}_t } .\]

  \item For $t\subseteq X$ of size $k-1$, let $s^k_t = s^{k,k,\emptyset,\emptyset}_t$.
\end{itemize}

  Now we can state the conditions to be satisfied by our inductive construction:

  \begin{itemize}
    \item [$\boxplus_k$] For every $j \in (k,n)$ and every $c\bar{c}\in X^{j-k}$ and $A\barA \in M_k^j$, $D^{k,j,c\bar{c}}_{A\barA} \subseteq D^{k+1,j,\bar{c}}_{\barA}$ (for $k\geq n-1$, this condition holds trivially). 
    \item [$\boxtimes_k$]
      For all $t\subseteq X$ of size $|t|=k$, for some $a \in t$, $a \in s^k_{t \setminus \{a\}}$.
  \end{itemize}

  Note that each $|s^{k,j,\bar{c},\barA}_t|<\kappa$ by downwards induction on $j$: for $j=n$ this is clear, and suppose that $|s^{k,j+1,\bar{c}c,\barA A }_t|<\kappa$ for all $c \in X$ and $A\subseteq m_j$. Towards a contradiction, assume that $s^{k,j,\bar{c},\barA}_t$ contains a set $F$ of size $\kappa$. We may assume that for some $A \subseteq m_j$ and all $a \in F$, $\forall^* c\in D^{k,j,\bar{c}}_{\barA A} \ a\in s^{k,j+1,\bar{c}c,\barA A}_t$. For any $a \in F$ there is an end segment $F_a$ of $D^{k,j,\bar{c}}_{\barA A}$ such that for any $c \in F_a$, $a\in s^{k,j+1,\bar{c}c,\barA A}_t$. Since $D^{k,j,\bar{c}}_{\barA A}$ is cofinal in $\kappa^+$ (which is a regular cardinal), $\bigcap_{a\in F}F_a$ contains an end segment of $D^{k,j,\bar{c}}_{\barA A}$, and in particular is nonempty. Let $c \in \bigcap_{a\in F}F_a$. Then $F \subseteq s^{k,j+1,\bar{c}c,\barA A}_t$, contradicting the induction hypothesis.

  Note also that for $t \subseteq X$ of size $n-1$, $s_t = s^{n}_t$.

  We now proceed with the inductive construction of the $m_k$ and $D^{k,j,\bar{c}}_\barA$.

  For $k=n$, let $m_n=0$. 
  Then $\boxplus_n$ holds trivially and $\boxtimes_n$ holds by $\boxtimes$ above.

  Assume that $2\leq k<n$ and we found $m_{k'}$ and cofinal sets $D^{k',j,\bar{c}}_\barA$ such that $\boxplus_{k'}$ and $\boxtimes_{k'}$ hold for all $k'>k$. We want to find $m_k$ and sets $D^{k,j,\bar{c}}_\barA$ such that $\boxplus_k$ and $\boxtimes_k$ hold.

  For $m<\omega$ we let $\otimes_m$ be the following statement: there are
  \begin{itemize}
    \item cofinal sets $D^{k,j,\bar{c}}_\barA$ for $j \in [k,n)$ and $\bar{c}\in X^{j-k}$ and $\barA \in (\SS(m) \times M_{k+1}^j)$, 
    and
    \item subsets $t_i \subseteq X$ of size $k$ for $i < m$,
  \end{itemize}
  such that:
  \begin{enumerate}[(I)]
    \item $\boxplus_k$ holds with $m$ playing the role of $m_k$;
    \item if $B \neq A$ are subsets of $m$, and $i := \min (B \symdif A) \in B$, then for some enumeration $\bar{a}$ of $t_i$, the following hold:
    \begin{itemize}
      \item [$\oplus$] for all $c_k \in D^{k,k,\emptyset}_{(B)}$, for some $A_{k+1} \subseteq m_{k+1}$ and all $c_{k+1} \in D^{k,k+1,(c_k)}_{(B,A_{k+1})}$, \dots, for some $A_{n-1} \subseteq m_{n-1}$ and all $c_{n-1}$ in $D^{k,n-1,(c_k,\dots,c_{n-2})}_{(B,\dots,A_{n-1})}$, $$R(\bar{a},c_k,\dots,c_{n-1}) ;$$
      \item [$\ominus$] for all $c_k \in D^{k,k,\emptyset}_{(A)}$, for all $A_{k+1} \subseteq m_{k+1}$ and all $c_{k+1} \in D_{(A,A_{k+1})}^{k,k+1,(c_k)}$, \dots, for all $A_{n-1} \subseteq m_{n-1}$ and all $c_{n-1}$ in $D_{(A,\dots,A_{n-1})}^{k,n-1,(c_k,\dots,c_{n-2})}$, $$\neg R(\bar{a},c_k,\dots,c_{n-1}) .$$
    \end{itemize}
  \end{enumerate}

  If $\otimes_m$ holds for all $m<\omega$, we get IP as we now explain. Consider the formula $$\phi(\bar{x},\bar{y}) = \bigvee_{\barA \in M_{k+1}^{n-1}}R(x_0,\dots,x_{k-1};y_k,y_{k+1}^\barA,y_{k+2}^\barA,\dots,y_{n-1}^\barA).$$
  Fix some $m<\omega$. For $A\subseteq m$, we define a $\bar{y}$-tuple $\bar{c}^A$ as follows. Let $c^A_k \in D^{k,k,\emptyset}_{(A)}$ and for $j \in [k+1,n)$ and $(A_{k+1},...,A_{n-1}) \in M_{k+1}^{n-1}$, inductively let $c^{A,\barA}_j \in D^{k,j,(c^A_k,c^{A,\barA}_{k+1},\dots,c^{A,\barA}_{j-1})}_{(A,A_{k+1},...,A_j)}$. Then, by (II) we get that if $B \neq A$ and $i =\min (B\symdif A) \in B$, then for some tuple $\bar{a}$ enumerating $t_i$, $\phi(\bar{a},\bar{c}^B)$ holds, while $\phi(\bar{a},\bar{c}^A)$ does not. Let $E$ be the set of all $\bar{x}$-tuples $\bar{a}$ enumerating $t_i$ for all $i<m$. We get that the number of $\phi$-types in $\bar{y}$ over $E$ is exponential in $m$ (at least $2^m$). However, $|E|\leq m k!$. By Sauer-Shelah (\cite[Lemma 6.4]{simon-NIP}), we get that $\phi(\bar{x},\bar{y})$ has IP. As NIP formulas are closed under Boolean combinations, we get that $R(x_0,\dots,x_{k-1};y_k,\dots,y_{n-1})$ has IP, contradicting (5).

  We first show that $\otimes_0$ holds. Let $D_{(\emptyset)} ^{k,k,\emptyset}=X$ and for $j>k$, $\barA \in M_{k+1}^j$, $\bar{c} \in X^{j-(k+1)}$ and $c\in X$, let $D_{\emptyset\barA} ^{k,j,c\bar{c}} = D_\barA ^{k+1,j,\bar{c}}$. Then (I) is immediate, and (II) is trivially satisfied.

  Let $m_k$ be maximal such that $\otimes_{m_k}$ holds, witnessed by $D^{k,j,\bar{c}}_\barA$ and $t_i$. We claim that this $m_k$ and $D^{k,j,\bar{c}}_\barA$ satisfy $\boxplus_k$ and $\boxtimes_k$.
  $\boxplus_k$ is satisfied by (I), so we are left to check $\boxtimes_k$.

  Assume that $\boxtimes_k$ does not hold. Then there is some $t \subseteq X$ of size $k$ witnessing this: for all $a\in t$, $a \notin s^k_{t\setminus \{a\}}$.  We will show that letting $t_{m_k}:=t$, we can find new $D^{k,j,\bar{c}}_\barA$ for $\barA \in \SS(m_k+1) \times M_{k+1}^j$ and $\bar{c}\in X^{j-k}$ witnessing $\otimes_{m_k+1}$. We will construct two sequences of cofinal sets, $E_\barA^{k,j,\bar{c}}$ and ${F_\barA^{k,j,\bar{c}}}$, that will then be used to find suitable $D$'s.


  Let $A_k \subseteq m_k$. 
  Let $E_{(A_k)}^{k,k,\emptyset}=D_{(A_k)}^{k,k,\emptyset} \setminus (s^{k+1}_t \cup t)$. Since $s^{k+1}_t$ has size $<\kappa$, $E_{(A_k)}^{k,k,\emptyset}$ is still cofinal. Let $c_k \in E_{(A_k)}^{k,k,\emptyset}$. By $\boxtimes_{k+1}$ applied to $t \cup \{c_k\}$, and as $c_k \notin s^{k+1}_t$, for some $a_{c_k,A_k}  \in t$, we have $a_{c_k,A_k} \in s^{k+1}_{\{c_k\}\cup t \setminus \{a_{c_k,A_k}\}}$. As $t$ is finite, by reducing $E_{(A_k)}^{k,k,\emptyset}$, we may assume that there is some $a_{A_k} \in t$ such that $a_{A_k} \in s^{k+1}_{\{c_k\}\cup t \setminus \{a_{A_k}\}}$ for any $c_k \in E_{(A_k)}^{k,k,\emptyset}$.

  Let $j \in (k,n)$, $\barA \in M_k^j$, and $\bar{c}\in X^{j-k}$. Write $\bar{c}=c_k\bar{c}'$ and $\barA = A_k\barA'$, so $\barA' \in M_{k+1}^j$. We define cofinal sets $E_{\barA}^{k,j,\bar{c}}$ as follows 

  \begin{itemize}
    \item If 
      $\forall^* c \in D_{\barA'}^{k+1,j,\bar{c}'} \ a_{A_k} \in s_{\{c_k\}\cup t \setminus \{a_{A_k}\}}^{k+1,j+1,\bar{c}'c,\barA'}$ then let $S\subseteq D_{\barA'}^{k+1,j,\bar{c}'}$ be an end segment witnessing this, and set $E_{\barA}^{k,j,\bar{c}}= S \cap D_{\barA}^{k,j,\bar{c}}$. Note that $E_{\barA}^{k,j,\bar{c}}$ is cofinal as $D_{\barA}^{k,j,\bar{c}} \subseteq D_{\barA'}^{k+1,j,\bar{c}'}$ by $\boxplus_k$.
    \item Otherwise, let $E_{\barA}^{k,j,\bar{c}} = D_{\barA}^{k,j,\bar{c}}$.
  \end{itemize}

  By (upwards) induction on $j \in [k,n)$ one proves that: 
  \begin{itemize}
    \item [($\dagger_j$)] For any $A_k\subseteq m_k$ and any $c_k \in E_{(A_k)}^{k,k,\emptyset}$ there is some $A_{k+1} \subseteq m_{k+1}$ such that for any $c_{k+1} \in E_{(A_k,A_{k+1})}^{k,k+1,(c_k)}$ there is some $A_{k+2} \subseteq m_{k+2}$ such that \dots for any $c_j \in E_{(A_k,\dots,A_j)}^{k,j,(c_k,\dots,c_{j-1})}$, $a_{A_k} \in s_{\{c_k\}\cup t \setminus \{a_{A_k}\}}^{k+1,j+1,(c_{k+1},\dots,c_j),(A_{k+1},\dots,A_j)}$.
  \end{itemize}

  Now for $A_k \subseteq m_k$, let $F_{(A_k)}^{k,k,\emptyset} \subseteq D_{(A_k)}^{k,k,\emptyset}$ be a cofinal set such that $a_{A_k} \notin s^{k,k+1,(c),(A_k)}_{t\setminus \{a_{A_k}\}}$ for any $c \in F_{(A_k)}^{k,k,\emptyset}$; such a set exists since $a_{A_k} \notin s^k_{t\setminus \{a_{A_k}\}} = s^{k,k,\emptyset,\emptyset}_{t\setminus \{a_{A_k}\}}$.

  Then for any $j \in (k,n)$, any $\bar{c} \in X^{j-k}$ and any $\barA = (A_k,...,A_j) \in M_k^j$, we similarly define cofinal sets $F_{\barA}^{k,j,\bar{c}}$ as follows.
  \begin{itemize}
    \item If $a_{A_k} \in s^{k,j,\bar{c},(A_k,...,A_{j-1})}_{t\setminus \{a_{A_k}\}}$, let $F_\barA^{k,j,\bar{c}}=D_\barA^{k,j,\bar{c}}$, and
    \item if $a_{A_k} \notin s^{k,j,\bar{c},(A_k,...,A_{j-1})}_{t\setminus \{a_{A_k}\}}$, let $F_\barA^{k,j,\bar{c}}\subseteq D_\barA^{k,j,\bar{c}}$ be cofinal such that $a_{A_k} \notin s^{k,j+1,\bar{c}c,\barA}_{t\setminus \{a_{A_k}\}}$ for any $c \in F_\barA^{k,j,\bar{c}}$.
  \end{itemize}


  Recall that by choice of $t$, $a \notin s^k_{t\setminus \{a\}}$ for all $a\in t$. By (upwards) induction on $j \in [k,n]$ one proves that: 
  \begin{itemize}
    \item[($\$_j $)] If $(A_k,...,A_{j-1}) \in M_k^{j-1}$ and $(c_k,\dots,c_{j-1}) \in X^{j-k}$ are such that $c_i \in F_{(A_k,\dots,A_i)}^{k,i,(c_k,\dots,c_{i-1})}$ for every $i \in [k,j)$, then for every $i \in [k,j]$,
      $$a_{A_k} \notin s^{k,i,(c_k,\dots,c_{i-1}),(A_k,\dots,A_{i-1})}_{t\setminus \{a_{A_k}\}} .$$
  \end{itemize}


  Now, for any $A\barA\in \SS(m_k+1)\times M_{k+1}^j$, let $G_{A\barA}^{k,j,\bar{c}} := F_{A\barA}^{k,j,\bar{c}}$ if $m_k \notin A$, else let $G_{A\barA}^{k,j,\bar{c}} := E_{(A \cap m_k)\barA}^{k,j,\bar{c}}$. 
  Now we show that $\sequence{t_i}{i\leq m_k}$ and these $G_{A\barA}^{k,j,\bar{c}}$ witness $\otimes_{m_k+1}$.
  Note that $G_{A\barA}^{k,j,\bar{c}}$ is a cofinal subset of $D_{(A\cap
  m_k)\barA}^{k,j,\bar{c}}$. 
  Hence $\boxplus_k$ still holds, establishing (I).
  For (II), let $B \neq A$ be subsets of $m_k+1$. If $i=\min( B \symdif A) \in B$ and $i < m_k$, then $i=\min({(B\cap m_k) \symdif (A\cap m_k)})$ and so $\oplus$ and $\ominus$ still hold (using $G_{A\barA}^{k,j,\bar{c}} \subseteq D_{(A\cap m_k)\barA}^{k,j,\bar{c}}$). If not, then $B = A\cup \{m_k\}$. Let $\bar{a}$ be an enumeration of $t_{m_k}$ starting with $a_{A}$. Then $\oplus$ follows from $(\dagger_{n-1})$ and $\ominus$ follows from $(\$_n)$, as required.

  This completes the construction of $m_k$ and $D_\barA^{k,j,\bar{c}}$ as in (A), (B) above for all $k \in [2,n]$.

  Finally, $\boxtimes_2$ yields a contradiction by the argument of \cref{rem:n=2}.
  Indeed, by $\boxtimes_2$ we get that for any distinct $a,b \in X$, either $a \in s^2_{\{b\}}$ or $b \in s^2_{\{a\}}$. Let $X_0, X_1 \subseteq X$ be such that $X_0 \cap X_1 =\emptyset$, $|X_0|=\kappa$ and $|X_1|=\kappa^+$. Let $S = \bigcup\set{s^2_{\{a\}}}{a\in X_0}$. As $|S|\leq \kappa$, there must be some $b\in X_1\setminus S$. As $|s^2_{\{b\}}|<\kappa$, there must be some $a \in X_0 \setminus s^2_{\{b\}}$. But then $a \notin s^2_{\{b\}}$ and $b \notin s^2_{\{a\}}$ --- contradiction.
\end{proof}

\begin{proof}[Proof of \cref{thm:cofinal family implies IP}]
  Suppose that $|X| \geq \kappa^+$ and that $\calF$ is a cofinal family of subsets of $X$, each of size $<\kappa$. Suppose that $\VC(\calF) = n$. 

  For any $0\leq k \leq n$ and any $m \leq k$, let $R_{m,k}(x_0,\dots, x_k)$ be the relation defined by:

  \[ [\exists t\in \calF\; \bigwedge_{1\leq i \leq k} (x_i \in t)^{(i \leq m)}] \land
   [\forall t \in \calF\; ((\bigwedge_{1\leq i \leq k} (x_i \in t)^{(i \leq m)})\to x_0 \in t)].\]

  (If $k=0$ the conjunction is empty and thus holds trivially, meaning that $R_{0,0}(x_0) = \forall t\in \calF\; x_0\in t$.)

  Let $R(x_0,x_1,\dots,x_n) = \bigvee_{m \leq k\leq n} R_{m,k}(x_0,\dots,x_k)$. We claim that $R$ satisfies the conditions of \cref{lem:the technical lemma} on $X$.
  Conditions (1) and (2) are trivial, condition (3) follows from the fact that it is true for each $m,k$ separately and that each $t \in \calF$ has size $<\kappa$ (using the existential clause of the definition of $R_{m,k}$).

  We show condition (4). Suppose that $A \subseteq X$ has size $n+1$. Since $\VC(\calF) = n$, $\calF$ does not shatter $A$. Let $B \subseteq A$ be of minimal size such that $\calF$ does not shatter $B$. Note that $B$ is nonempty and let $k = |B|-1$. 
  Since $B$ is not shattered, there is some $B_0 \subseteq B$ such that for no $t \in \calF$, $t\cap B = B_0$. Note that $B_0 \neq B$ since $\calF$ is $\omega$-cofinal (and $B$ is finite). Let $m = |B_0|$. Let $a_0\in B \setminus B_0$ and let $a_1,\dots,a_k$ enumerate $B \setminus \{a_0\}$ such that $a_i \in B_0$ iff $i \leq m$. It follows that $R_{m,k}(a_0,a_1,\dots,a_k)$ holds: the first clause holds by the minimality of $B$ (any proper subset is shattered), and the second clause follows by the choice of $B_0$.

  By \cref{lem:the technical lemma}, for some permutation $\sigma$ of $\{1,\dots,n\}$ and some $1<k<n$ the partitioned formula $R(x_{0},x_{\sigma(1)}, \dots, x_{\sigma(k-1)}; x_{\sigma(k)},\dots ,x_{\sigma(n-1)})$ has IP and we are done.
\end{proof}

\begin{question}
  Let $R(x, y, z)$ be the relation from \cref{exa:Omer's example}. The proof of \cref{lem:the technical lemma} yields that $R(x,y;z)$ has IP.  Could the relation $R(x;y,z)$ be NIP? Note that $\set{R(\omega_1;\beta,\alpha)}{\beta,\alpha}$ is a cofinal family of finite subsets of $\omega_1$ (see \cref{thm:cofinal family of VC dim 2}).

  Similarly, we do not know whether the formula $\phi(x,z;y) = R(x,y,z)$ has IP. 
\end{question}

\section{Conclusion and final thoughts}\label{sec:conclusion}

We conclude with the final theorem, i.e., the generalisation of \cref{mainthm intro} discussed in \cref{sec:kappa}.
\begin{theorem}\label{thm:main theorem paper}
  Let $\kappa$ be an infinite cardinal. If $T = \Th(M)$ is NIP and $X \subseteq M^k$ is externally definable of size $\geq \kappa^+$, then $X$ contains an $M$-definable subset of size $\geq \kappa$. In other words,  $\ext(T,\kappa) \leq \kappa^+$.
\end{theorem}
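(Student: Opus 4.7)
The overall plan is to invoke \cref{thm:cofinal family implies IP} via honest definitions and then read off IP in the Shelah expansion $M^{\Sh}$, contradicting \cref{fac:Shelah expansion}. Suppose for contradiction that $X$ contains no $M$-definable subset of size $\geq \kappa$. Write $X = \phi(M,c)$ for some $\L$-formula $\phi(x,y)$ and some $c \in \U^y$. Since $T$ is NIP, $\phi(x,y)$ is NIP, so \cref{fac: uniform of HD} provides an honest definition $\psi(x,z)$ for $\tp_{\phiopp}(c/M)$: for every finite $X_0 \subseteq X$ there is some $d \in M^z$ with $X_0 \subseteq \psi(M,d) \subseteq X$.

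Set
\[ \calF := \{\psi(M,d) : d \in M^z \text{ and } \psi(M,d) \subseteq X\}. \]
By honesty, $\calF$ is an $\omega$-cofinal family of subsets of $X$, each of which is $M$-definable. Our contradiction hypothesis then forces every element of $\calF$ to have size $<\kappa$, so \cref{thm:cofinal family implies IP} applies (since $|X| \geq \kappa^+$) and yields a formula $\varphi(\bar{x}; \bar{y})$ in the two-sorted signature $\{\mathord{\in}\}$ that has IP in $(X, \calF; \mathord{\in})$.

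Finally, I would transfer this IP back to $M^{\Sh}$. In $M^{\Sh}$ the set $X$ is named by a predicate $R_X$; the index set $I := \{d \in M^z : \forall x (\psi(x,d) \to R_X(x))\}$ is $M^{\Sh}$-definable; and for $a \in R_X(M)$ and $d \in I$ the relation $a \in \psi(M,d)$ is given by the $\L$-formula $\psi(a,d)$. Relativising each $X$-quantifier in $\varphi$ to $R_X$, each $\calF$-quantifier to $I$, and replacing each atomic $\mathord{\in}$ by $\psi$ produces an $M^{\Sh}$-formula $\varphi'$ whose truth values on the relevant tuples coincide with those of $\varphi$. The shattering witnesses for $\varphi$ thus serve as shattering witnesses for $\varphi'$, so $\varphi'$ has IP in $M^{\Sh}$, contradicting \cref{fac:Shelah expansion}. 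The principal obstacle --- that any $\omega$-cofinal family of subsets of size $<\kappa$ of a set of size $\geq \kappa^+$ has IP --- has already been handled by \cref{thm:cofinal family implies IP}; what remains here is only the translation dictionary between the abstract two-sorted structure $(X,\calF;\mathord{\in})$ and $M^{\Sh}$ via the honest definition $\psi$.
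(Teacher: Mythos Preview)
Your argument is correct and matches the paper's proof essentially step for step: honest definition $\psi$, the family $\calF$ of $\psi$-instances contained in $X$, $\omega$-cofinality by honesty, sizes $<\kappa$ by the contradiction hypothesis, IP from \cref{thm:cofinal family implies IP}, and a contradiction with \cref{fac:Shelah expansion}. The only cosmetic difference is that the paper packages your final translation paragraph into the single phrase ``$(X,\calF;\mathord{\in})$ is interpretable in $M^{\Sh}$'', whereas you spell out the dictionary explicitly; these are the same argument.
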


\begin{proof}
  Suppose that $X$ is defined by $\phi(x,c)$ for some formula $\phi(x,y)$. Let $\psi(x,z)$ be an honest definition for $\tp_\phiopp(c/M)$.
  This means that for every finite set $X_0 \subseteq X$, there is some $d\in M^z$ such that

  \[X_0 = \phi(X_0,c) \subseteq \psi(M,d) \subseteq \phi(M,c) = X.\]

  Let $Y = \set{d\in M^z}{\psi(M,d) \subseteq X}$. Note that $Y$ is definable in $M^{Sh}$. If for some $d\in Y$, $\psi(M,d)$ has size $\geq \kappa$, we are done, so assume for all $d\in Y$, $|\psi(M,d)|<\kappa$.  Let $\calF = \set{\psi(M,d)}{d \in Y}$. Then $N:=(X,\calF; \in)$ is interpretable in $M^{Sh}$. By \cref{fac:Shelah expansion} it follows that $N$ is NIP, contradicting \cref{thm:cofinal family implies IP} as required.
\end{proof}

\begin{remark}\label{rem:honestNIP}
Note that the above proof implies that in an NIP theory, if $X = \phi(M,c)$ is externally definable of size $\kappa^+$, then an instance of the honest definition of $\tp_\phiopp(c/M)$ has size $\geq \kappa$. By \cref{fac: uniform of HD}, we know that the existence of an honest definition $\psi(x,z)$ only requires $\phi$ to be NIP. We do not know if $\psi$ itself can be chosen to be NIP (this is open even for the finite case, see \cite[Question 22]{UDTFS}). However, even if it were NIP, we cannot get a contradiction as in the proof above due to \cref{thm:cofinal family of VC dim 2}.
\end{remark}

\begin{question}\label{que:local ext def}
  Suppose that $M$ is a structure and $X = \phi(M,c)$ is externally definable of size $\geq \aleph_1$. Suppose that $\phi$ is NIP. Does it follow that $X$ contains an infinite definable subset?
\end{question}

Note that when $T$ eliminates the quantifier $\exists^\infty$, the answer is ``yes'' (even just assuming that $X$ is infinite), as in \cite[Corollary 1.12(1)]{CSPairsI}: by \cref{fac: uniform of HD} (or \cite[Theorem~3.13]{simon-NIP}), $\tp_{\phiopp}(c/M)$ has an honest definition $\psi$, and so as above some instance $\psi(M,d) \subseteq X$ contains a finite subset $X_0$ large enough that, by elimination of $\exists^\infty$, the instance must be infinite.

Refining \cref{que:local ext def}, we can define $\ext(T,\phi,\kappa)$ as the minimal $\lambda$ (if exists) such that whenever $M \vDash T$ and $X \subseteq M^k$ is externally definable by $\phi(x,c)$ for some $c\in \U$, then $X$ contains an $M$-definable subset of size $\geq \kappa$. By \cref{thm:main theorem paper}, if $T$ is NIP then $\ext(T,\phi,\kappa) \leq \kappa^+$. If the honest definition of $\phi$ is NIP then by \cref{rem:aleph_alpha+omega}, if $\kappa = \aleph_\alpha$, $\ext(T,\phi,\kappa)\leq \aleph_{\alpha+\omega}$. If we assume only that $\phi$ is NIP, it is not even clear that $\ext(T,\phi,\aleph_0)$ exists.

\begin{question}
  What is $\ext(T,\phi,\kappa)$ when $\phi$ is NIP?
\end{question}

\begin{remark}\label{rem:random}
  Let $T$ be a complete theory. Suppose that there is some infinite $\emptyset$-definable set $Z$ of $x$-tuples such that $\phi(x,y)$ is \defn{random} on $Z$: for any finite disjoint sets $A,B\subseteq Z$ there is some $y$-tuple $d$ such that $A = \phi(A\cup B,d)$ (for the notations, see \cref{subsec:notation}). This is a strong negation of NIP and happens e.g., in the case of the random graph. Then every subset of $Z$ is externally definable by compactness. Let $T^{\Sk}$ be a Skolemization of $T$. Let $\lambda$ be any infinite cardinal and let $I = \sequence{a_i}{i< \lambda}$ be an indiscernible sequence (in the sense of $T^{\Sk}$) contained in $Z$ (in some model of $T$), and let $N = \Sk(I)$ (the Skolem hull of $I$). Then $X := \set{a_i}{i \text{ even}}$ is a subset of $N$ which is externally definable by an instance of $\phi$, but which does not contain an infinite $N$-definable subset (even in $\L^{\Sk}$). Hence $\ext(T,\phi,\aleph_0) = \infty$, and in particular $\ext(T, \aleph_0) = \infty$.
\end{remark}

\begin{question}
  Does $\ext(T,\aleph_0) = \infty$ hold whenever $T$ is IP?
  That is, does every IP theory have a model containing an uncountable externally definable set which contains no infinite definable set?
\end{question}

\appendix
\section{Almost agreeing orders on the countable ordinals} \label{appendix}
In this appendix, we show how to construct on each countable ordinal an order of order type $\omega$, in such a way that any two of the orders agree up to a finite set.
This result is not used in the paper.
It formed part of our first attempt to prove \cref{thm:cofinal family of VC dim 2},
but in the end turned out not to provide a route to proving that theorem.
We nonetheless present the result in this appendix, in the hope that it may be of interest in its own right.

\begin{definition}
  Let $X$ be a set. Say two orders $<^1$ and $<^2$ on $X$ \defn{almost agree}, and write
  $\mathord{<^1} \sim \mathord{<^2}$, if there is a finite subset $X_0 \subseteq X$ such that $\mathord{<^1}|_{(X\setminus X_0)} = \mathord{<^2}|_{(X\setminus X_0)}$.
\end{definition}

Note that $\sim$ is an equivalence relation.

If $(X,<)$ has order type $\omega$, we call $<$ an $\omega$-order on $X$.

\begin{theorem}\label{thm:almost agreeing orders on aleph1}
  There are $\omega$-orders $<^\alpha$ on each $\alpha$ for $\omega \leq \alpha<\omega_1$ such that
  $\mathord{<^\beta} \sim \mathord{<^\alpha} |_{\beta}$ whenever $\omega \leq \beta<\alpha$.
\end{theorem}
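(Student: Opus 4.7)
The plan is to proceed by transfinite induction on $\alpha \in [\omega, \omega_1)$, preserving the hypothesis $\mathord{<^\beta} \sim \mathord{<^\alpha}|_\beta$ for all $\omega \leq \beta < \alpha$. The base case $\alpha = \omega$ uses the standard order on $\omega$. For a successor $\alpha = \beta+1$, I would take $<^\alpha$ to be $<^\beta$ with $\beta$ placed at the initial position, so that $<^\alpha|_\beta = <^\beta$ on the nose; the almost-agreement with each earlier $\gamma \leq \beta$ then reduces to the inductive hypothesis via $<^\alpha|_\gamma = <^\beta|_\gamma \sim <^\gamma$.

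The main case is a limit ordinal $\eta$. Fixing a strictly increasing cofinal sequence $\omega \leq \eta_0 < \eta_1 < \cdots$ with $\sup_n \eta_n = \eta$, the plan is to build an \emph{exact} chain of $\omega$-orders $L'_n$ on $\eta_n$ (meaning $L'_m|_{\eta_n} = L'_n$ for $n \leq m$) with $L'_n \sim \mathord{<^{\eta_n}}$ for each $n$, and then set $<^\eta := \bigcup_n L'_n$. The required almost-agreement comes out cheaply: for any $\omega \leq \beta < \eta$, choosing $N$ with $\eta_N \geq \beta$, one has $<^\eta|_\beta = L'_N|_\beta \sim \mathord{<^{\eta_N}}|_\beta \sim \mathord{<^\beta}$ by restriction, the transitivity of $\sim$, and the inductive hypothesis applied at $\eta_N$.

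To construct $L'_n$ from $L'_{n-1}$ and the given $<^{\eta_n}$, I would use that $L'_{n-1} \sim \mathord{<^{\eta_{n-1}}} \sim \mathord{<^{\eta_n}}|_{\eta_{n-1}}$ by the inductive hypothesis, so these two orders on $\eta_{n-1}$ agree outside a finite common initial segment $F$ (which may be chosen as an initial segment of each). For any threshold $M_n \geq |F|$, a short argument shows that the first $M_n$ elements of $L'_{n-1}$ coincide as a set with the first $M_n$ elements of $\eta_{n-1}$ in $<^{\eta_n}$-order, differing only in their internal arrangement. I would then define $L'_n$ to list these $M_n$ elements first in $L'_{n-1}$-order and then continue with the $<^{\eta_n}$-enumeration of $\eta_n$ with those $M_n$ elements omitted. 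A direct check then gives $L'_n|_{\eta_{n-1}} = L'_{n-1}$ exactly, shows that $L'_n$ and $<^{\eta_n}$ agree beyond a finite initial segment (so $L'_n \sim \mathord{<^{\eta_n}}$), and confirms that every element of $\eta_n \setminus \eta_{n-1}$ sits at $L'_n$-position $\geq M_n$.

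To force the union $L := \bigcup_n L'_n$ to have order type $\omega$, I would fix an enumeration $\eta = \{z_j : j < \omega\}$ and at step $n$ choose $M_n$ large enough to also exceed the $L'_{n-1}$-positions of every element of the finite set $\{z_j : j < n\} \cap \eta_{n-1}$. Then at any later stage $m \geq n$, new elements of $\eta_m \setminus \eta_{m-1}$ only enter $L'_m$ at positions $\geq M_m$, which remains above the position of each such $z_j$, so $z_j$'s position in $L'_m$ stabilises and $z_j$ ends up with finitely many $L$-predecessors. The main obstacle is precisely this limit step: each $L'_n$ must differ from $<^{\eta_n}$ only on a finite set, yet must simultaneously fit exactly onto the committed chain $L'_{n-1}$ and have a long enough initial segment to protect previously-fixed positions from the new elements of $\eta_n \setminus \eta_{n-1}$. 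The set-theoretic coincidence of the two candidate initial segments of $\eta_{n-1}$ noted above is what lets all three constraints be met at once.
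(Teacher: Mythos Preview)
Your proof is correct and follows the same overall strategy as the paper: transfinite induction with identical base and successor cases, and at a limit $\eta$ building an exact increasing chain of $\omega$-orders $L'_n \sim \mathord{<^{\eta_n}}$ along a cofinal sequence $(\eta_n)$, then taking the union. The difference lies in how the union is forced to have order type $\omega$. The paper first builds the exact chain via a general adjustment lemma (its \cref{lem:adjusing one order}, iterated in \cref{lem: adjusing an infinite sequence}), notes that the resulting union $\mathord{<^*}$ need not be an $\omega$-order, and then performs a separate post-hoc reordering by concatenating finite blocks $b_i = \alpha_i \cap (-\infty,i)_{<^*} \setminus \bigcup_{j<i} b_j$; it must then check that this reordering remains $\sim$-equivalent to each $\lessdot^{\alpha_i}$. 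You instead bake the control into the chain itself: your observation that the first $M_n$ elements of the two almost-agreeing orders on $\eta_{n-1}$ coincide as a set is exactly the specific instance of the adjustment lemma needed, and by choosing $M_n$ large enough to freeze the positions of a fixed enumeration of $\eta$, the union is automatically an $\omega$-order with no further reordering required. Your route is slightly more direct at the limit step; the paper's route isolates a cleaner reusable lemma but pays for it with the extra block-concatenation argument.
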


Before proving \cref{thm:almost agreeing orders on aleph1}, we establish a pair of lemmas.

\begin{lemma} \label{lem:adjusing one order}
  Suppose that $(X,<^X)$ and $(Y,<^Y)$ are both $\omega$-orders, $X\subseteq Y$, and $\mathord{<^X} \sim \mathord{<^Y}|_X$. Then there is some $\omega$-order $\lessdot^Y$ on $Y$ such that $\mathord{\lessdot^Y} \sim \mathord{<^Y}$ and $\mathord{\lessdot^Y}|_X = \mathord{<^X}$.
\end{lemma}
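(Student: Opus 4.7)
My plan is to modify $<^Y$ only within a carefully chosen finite $<^Y$-initial segment $Y^*$ of $Y$, arranging that $Y^* \cap X$ is a $<^X$-initial segment of $X$, after which the elements of $Y^*$ can be freely rearranged to match $<^X$.

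First, I would enlarge $X_0$ to a finite $X_0' \subseteq X$ that is simultaneously an initial segment of $<^X$ and of $<^Y|_X$. Letting $a$ be the $<^X$-maximum of $X_0$ and $b$ the $<^Y|_X$-maximum, set
\[ X_0' = \{c \in X : c \leq^X a\} \cup \{c \in X : c \leq^Y b\}. \]
This is finite (as both orders have type $\omega$) and contains $X_0$; a short case analysis using the hypothesis that $<^X$ and $<^Y|_X$ agree on $X \setminus X_0$ shows it is initial in both orders. The orders continue to agree on $X \setminus X_0'$ since $X \setminus X_0' \subseteq X \setminus X_0$.

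Next, since $X_0'$ is a $<^Y|_X$-initial segment of $X$, I can take $Y^*$ to be the $<^Y$-initial segment of $Y$ up to and including the $<^Y$-maximum of $X_0'$; then $Y^*$ is finite and $Y^* \cap X = X_0'$. I then define $\lessdot^Y$ to agree with $<^Y$ on $Y \setminus Y^*$, to make $Y^*$ a $\lessdot^Y$-initial segment of $Y$, and to be any linear order on the finite set $Y^*$ that extends $<^X|_{X_0'}$.

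Verification is then routine: $\lessdot^Y$ has order type $\omega$ (a finite block followed by a copy of $\omega$), it agrees with $<^Y$ outside the finite set $Y^*$, and $\lessdot^Y|_X = <^X$ --- on $X_0'$ by construction, on $X \setminus X_0'$ because $<^Y|_X = <^X$ there, and the cross-comparisons hold because $X_0' \subseteq Y^*$ while $X \setminus X_0' \subseteq Y \setminus Y^*$. The only non-trivial step is the initial reduction to $X_0'$; once $Y^* \cap X$ is known to be initial in $<^X$, the rest is forced.
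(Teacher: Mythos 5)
Your proof is correct, and it takes a genuinely different route from the paper's. The paper proceeds by induction on $|X_0|$: it removes a single point $x \in X_0$, applies the inductive hypothesis to $Y \setminus \{x\}$, and then re-inserts $x$ by computing the cut on $(Y\setminus\{x\},\lessdot)$ induced by the cut of $x$ in $(X,<^X)$, checking that this cut is finite. Your approach instead performs a single global construction: you first enlarge the discrepancy set $X_0$ to a finite set $X_0'$ that is simultaneously a $<^X$- and a $<^Y|_X$-initial segment (the verification that $X_0'$ is initial in both orders, using that $a$ and $b$ are maxima and that the orders agree off $X_0$, is exactly the right case analysis and does go through), then you take the finite $<^Y$-initial segment $Y^*$ of $Y$ ending at the $<^Y$-maximum of $X_0'$, observe $Y^*\cap X = X_0'$, and freely re-order the finite block $Y^*$ to extend $<^X|_{X_0'}$. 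This avoids induction entirely and replaces the paper's cut computation with the observation that a finite initial block can be rearranged at will. The one tiny omission is the degenerate case $X_0=\emptyset$ (where $a,b$ are undefined); there one just takes $\lessdot^Y=\mathord{<^Y}$. Overall your version is a clean, non-inductive alternative to the paper's one-point-at-a-time argument.
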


\begin{proof}
  Let $X_0 \subseteq X$ be finite such that $<^X$ and $\mathord{<^Y} |_X$ agree on $X \setminus X_0$.
  We define an order on $Y$ which agrees with $<^Y$ on $Y \setminus X_0$ and places $X_0$ in a way which agrees with $<^X$ on $X$.
  Formally, we prove the lemma by induction on $|X_0|$. If $X_0$ is empty there is nothing to do. Let $x \in X_0$, $Z = X\setminus \{x\}$, $W = Y \setminus \{x\}$, $\mathord{<^Z} = \mathord{<^X} |_Z$ and $\mathord{<^W} = \mathord{<^Y} |_W$. Note that $<^Z$ and $<^W$ are still $\omega$-orders. By the induction hypothesis, there is some order $\lessdot^W$ on $W$ such that $\mathord{\lessdot^W} \sim \mathord{<^W}$ and $\mathord{<^Z} \subseteq \mathord{\lessdot^W}$.

  Let $F = \set{y\in W}{\exists z \in X\; (z <^X x \land y \leqdot^W z)}$; this is the cut on $(W,\lessdot^W)$ induced by the cut of $x$ on $(Z,<^Z)$. Note that $F$ is downwards closed in $\lessdot^W$ and that $F$ is finite: let $x' \in X$ be such that $x<^X x'$. Then if $y\in F$ and $z$ witnesses this, then $z <^X x <^X x'$ so that $z<^Z x'$ and hence $y \leqdot^W z \lessdot^W x'$. But $(W,\lessdot^W)$ is an $\omega$-order and hence $\set{y\in W}{y\lessdot^W x'}$ is finite.

  Let $\lessdot^Y$ extend $\lessdot^W$ and be such that for all $y\in Y$, $y\lessdot^Y x$ iff $y \in F$. To show that $\lessdot^Y$ is an $\omega$-order, it is enough to show that $\set{y\in Y}{y<^Y x}$ is finite, but this is precisely $F$. Since $F \cap X = \set{z\in Z}{z<^Z x}= \set{x'\in X}{x'<^X x}$, it follows that $\mathord{\lessdot^Y} \supseteq \mathord{<^X}$. Finally, if $\lessdot^W$ and $<^W$ agree on $W \setminus W_0$ where $W_0 \subseteq W$ is finite, then $<^Y$ and $\lessdot^Y$ agree on $Y\setminus (W_0 \cup \{x\})$ so that $\mathord{<^Y}\sim \mathord{\lessdot^Y}$ as required.
\end{proof}

\begin{lemma}\label{lem: adjusing an infinite sequence}
  Suppose that $\sequence{X_i}{i<\omega}$ is an increasing sequence of countable sets, $(X_i,<^i)$ are $\omega$-orders, and $\mathord{<^{i+1}} |_{X_i} \sim \mathord{<^i}$ for all $i<\omega$. Then there are $\omega$-orders $\lessdot^i$ on $X_i$ such that $\mathord{\lessdot^0} = \mathord{<^0}$, and $\mathord{\lessdot^i} \sim \mathord{<^i}$, and $\mathord{\lessdot^{i+1}} |_{X_i} = \mathord{\lessdot^i}$ for all $i<\omega$.
\end{lemma}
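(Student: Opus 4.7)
The plan is a straightforward induction on $i$, using the previous lemma (\cref{lem:adjusing one order}) at each step. First, I would set $\mathord{\lessdot^0} := \mathord{<^0}$; this trivially satisfies $\mathord{\lessdot^0} \sim \mathord{<^0}$ (equality).

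For the inductive step, suppose that for some $i < \omega$ an $\omega$-order $\lessdot^i$ on $X_i$ has been produced with $\mathord{\lessdot^i} \sim \mathord{<^i}$, and (if $i > 0$) with $\mathord{\lessdot^i}|_{X_{i-1}} = \mathord{\lessdot^{i-1}}$. I want to construct an $\omega$-order $\lessdot^{i+1}$ on $X_{i+1}$ satisfying $\mathord{\lessdot^{i+1}}|_{X_i} = \mathord{\lessdot^i}$ and $\mathord{\lessdot^{i+1}} \sim \mathord{<^{i+1}}$. The key observation is that, by the hypothesis of the theorem, $\mathord{<^{i+1}}|_{X_i} \sim \mathord{<^i}$, while by the inductive hypothesis $\mathord{<^i} \sim \mathord{\lessdot^i}$; since $\sim$ is an equivalence relation, transitivity yields $\mathord{<^{i+1}}|_{X_i} \sim \mathord{\lessdot^i}$. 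This is precisely the hypothesis needed to apply \cref{lem:adjusing one order} with $X := X_i$, $Y := X_{i+1}$, $\mathord{<^X} := \mathord{\lessdot^i}$, and $\mathord{<^Y} := \mathord{<^{i+1}}$ (both are $\omega$-orders, and $X \subseteq Y$). The lemma then furnishes an $\omega$-order on $X_{i+1}$, which I take as $\lessdot^{i+1}$; it satisfies both $\mathord{\lessdot^{i+1}} \sim \mathord{<^{i+1}}$ and $\mathord{\lessdot^{i+1}}|_{X_i} = \mathord{\lessdot^i}$, closing the induction.

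I do not foresee any genuine obstacle: the one-step realignment of an $\omega$-order to match a prescribed restriction on a subset, at only finite cost, is exactly the content of \cref{lem:adjusing one order}, and the present lemma is just that adjustment iterated along $\omega$. The only point worth flagging is that the finite symmetric difference between $\lessdot^i$ and $<^i$ may grow without bound as $i \to \infty$; this is harmless because the conclusion only demands a finite discrepancy at each individual index $i$, not a uniform bound across all $i$.
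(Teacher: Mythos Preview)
Your proposal is correct and essentially identical to the paper's proof: both set $\mathord{\lessdot^0} = \mathord{<^0}$ and then inductively use transitivity of $\sim$ to obtain $\mathord{<^{i+1}}|_{X_i} \sim \mathord{\lessdot^i}$, at which point \cref{lem:adjusing one order} yields $\lessdot^{i+1}$. Your remark about the possibly unbounded growth of the discrepancy is a nice clarification but, as you note, irrelevant to the conclusion.
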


\begin{proof}
  Inductively define $\lessdot^i$ as follows. Let $\mathord{\lessdot^0} = \mathord{<^0}$. Suppose we defined $\lessdot^i$. Since $\mathord{<^i} \sim \mathord{\lessdot^i}$ and $\mathord{<^{i+1}} |_{X_i} \sim \mathord{<^i}$, it follows that $\mathord{<^{i+1}} |_{X_i} \sim \mathord{\lessdot^i}$. By \cref{lem:adjusing one order}, there is some $\omega$-order $\lessdot^{i+1}$ on $X_{i+1}$ such that $\mathord{\lessdot^{i+1}} \sim \mathord{<^{i+1}}$ and $\mathord{\lessdot^{i+1}} |_{X_i} = \mathord{\lessdot^i}$, as required.
\end{proof}

\begin{proof}[Proof of \cref{thm:almost agreeing orders on aleph1}]
  We define the orders $<^\alpha$ by induction on $\omega \leq \alpha <\omega_1$.
  Define $<^\omega$ as the canonical order on $\omega$.

  Suppose that $\alpha = \beta +1$ and $<^\beta$ has been defined. Let $\mathord{<^\alpha} = \mathord{<^\beta} \cup \set{(\beta,\gamma)}{\gamma<\beta}$. In other words, we put $\beta$ as the first element of $<^\alpha$ without changing anything else. Now, if $\gamma\leq \beta$ then $\mathord{<^\alpha} |_{\gamma} = \mathord{<^\beta} |_{\gamma} \sim \mathord{<^\gamma}$ by induction.

  Now suppose that $\alpha>\omega$ is a limit ordinal. Let $\sequence{\alpha_i}{i<\omega}$ be an increasing sequence of ordinals, cofinal in $\alpha$, where $\alpha_0 = \omega$. Apply \cref{lem: adjusing an infinite sequence} to the sequence $(\alpha_i, <^{\alpha_i})$ (which we can by the induction hypothesis) to get $\omega$-orders $\lessdot^{\alpha_i}$ on $\alpha_i$ such that $\mathord{\lessdot^{\alpha_0}} = \mathord{<^{\alpha_0}} = \mathord{<^\omega}$, and $\mathord{\lessdot^{\alpha_i}}\sim \mathord{<^{\alpha_i}}$, and $\mathord{\lessdot^{\alpha_{i+1}}} \supseteq \mathord{\lessdot^{\alpha_i}}$.

  %
  %
  %

  Let $\mathord{<^*} = \bigcup\set{\mathord{\lessdot^{\alpha_i}}}{i<\omega}$. We define an $\omega$-order $<^\alpha$ on $\alpha$ by, roughly speaking, concatenating the finite orders obtained by taking, for each $i<\omega$ in turn, those elements of $\alpha_i$ which are $\lessdot^{\alpha_i}$-less than $i$ and have not yet been taken, ordered by $\lessdot^{\alpha_i}$. Formally: for $\beta <\alpha$, let $(-\infty,\beta)$ be $\set{\gamma <\alpha}{\gamma <^* \beta}$. Define inductively sets $b_i \subseteq \alpha$ for $i<\omega$ as follows: $b_i = \alpha_i\cap (-\infty,i) \setminus \bigcup_{j< i}{b_j}$ (so $b_0 = \emptyset = (-\infty,0)\cap \alpha_0$, since $\mathord{\lessdot^{\alpha_0}} = \mathord{<^{\omega}}$ is the canonical order on $\omega$). Note that $b_i \cap b_j =\emptyset$ for $i \neq j$ and that $\alpha = \bigcup_{i<\omega}b_i$ (if $\beta<\alpha$, then $\beta <\alpha_i$ for some $i<\omega$ and hence $\beta \lessdot^{\alpha_i} m$ for some $m<\omega$, so that $\beta \in (-\infty,m)$ and hence $\beta \in b_k$ for some $k\leq \max\{i,m\}$). Finally, note that each $b_i$ is finite since $(\alpha_i,\lessdot^{\alpha_i})$ has order type $\omega$.

  Order each $b_i$ by $<^*$ and put the $b_i$'s in order to define $<^\alpha$. More formally, let $<^{lex}$ be the lexicographical order on $\omega \times \alpha$ (taking the canonical order on $\omega$ and $<^*$ on $\alpha$). For $\beta<\alpha$ let $i(\beta)$ be such that $\beta \in b_{i(\beta)}$, and for $\beta,\gamma <\alpha$ put $\gamma <^{\alpha} \beta$ iff $(i(\beta),\beta) <^{lex} (i(\gamma),\gamma)$.

  We check that $<^\alpha$ is as required. The order type of $(\alpha,<^*)$ is $\omega$ since each $b_i$ is finite, so that for any $\beta <\alpha$, $\set{\gamma<\alpha}{\gamma <^{\alpha} \beta}$ is finite. Now suppose that $\beta < \alpha$. Then $\beta <\alpha_i$ for some $i<\omega$. Since $\mathord{<^\beta}\sim \mathord{<^{\alpha_i}} |_{\beta}$ and $\mathord{<^{\alpha_i}} \sim \mathord{\lessdot^{\alpha_i}}$, to show $\mathord{<^\beta} \sim \mathord{<^\alpha} |_{\beta}$ it suffices to check that $\mathord{\lessdot^{\alpha_i}} \sim \mathord{<^\alpha}|_{\alpha_i}$.

  To show this we show that if $\gamma,\beta \in \alpha_i \setminus \bigcup_{j\leq i}b_j$ then
  \begin{equation}\label{eqn:showing that < is good}
    \gamma \lessdot^{\alpha_i} \beta \iff\gamma <^\alpha \beta. \tag{*}
  \end{equation}
  Indeed, if $\gamma,\beta \in b_j$ for some $j<\omega$ then \cref{eqn:showing that < is good} follows from the fact that $<^\alpha$ equals $<^*$ on $b_j$, so extends $\lessdot^{\alpha_i} |_{b_j}$. Suppose that $\gamma \in b_j$, $\beta \in b_k$ and $j \neq k$, so without loss $i<j<k$. Then, since $\gamma,\beta \in \alpha_i$, necessarily $\gamma \in (-\infty,j)$ and $\beta \notin (-\infty,j)$. In this case, $\gamma \lessdot^{\alpha_i} j \lessdot^{\alpha_i} \beta$ (since this is true for $<^*$) and $\gamma <^\alpha \beta$ by definition.
\end{proof}

\bibliographystyle{alpha}
\bibliography{externallydefinable}

\begin{thebibliography}{BKS21}

\bibitem[BKS21]{bays2021density}
Martin Bays, Itay Kaplan, and Pierre Simon.
\newblock Density of compressible types and some consequences, 2021.

\bibitem[CS13]{CSPairsI}
Artem Chernikov and Pierre Simon.
\newblock Externally definable sets and dependent pairs.
\newblock {\em Israel J. Math.}, 194(1):409--425, 2013.

\bibitem[CS15]{CS-extDef2}
Artem Chernikov and Pierre Simon.
\newblock Externally definable sets and dependent pairs {II}.
\newblock {\em Trans. Amer. Math. Soc.}, 367(7):5217--5235, 2015.

\bibitem[EK20]{UDTFS}
Shlomo Eshel and Itay Kaplan.
\newblock On uniform definability of types over finite sets for {NIP} formulas.
\newblock {\em Journal of Mathematical Logic}, 0(0):2150015, 2020.

\bibitem[Jec03]{jech}
Thomas Jech.
\newblock {\em Set theory}.
\newblock Springer Monographs in Mathematics. Springer-Verlag, Berlin, 2003.
\newblock The third millennium edition, revised and expanded.

\bibitem[She09]{MR2570659}
Saharon Shelah.
\newblock Dependent first order theories, continued.
\newblock {\em Israel J. Math.}, 173:1--60, 2009.

\bibitem[Sim15]{simon-NIP}
Pierre Simon.
\newblock {\em A guide to {NIP} theories}, volume~44 of {\em Lecture Notes in
  Logic}.
\newblock Association for Symbolic Logic, Chicago, IL; Cambridge Scientific
  Publishers, Cambridge, 2015.

\end{thebibliography}

\end{document}